\newenvironment{axioms}{\begin{description}}{\end{description}}
\newcommand\AxiomItem[2]{\item[\qquad #1] \quad $#2$}
\newenvironment{proof}{\noindent\bf Proof. \rm}{\hfill $\mbox{\boldmath{$ \square$}}$}
\newcommand{\FIXMA}[1]{\verb+#1+}
\newcommand{\lthen}{\mathbin{\rightarrow}}    
\newtheorem{coro}{\bf Corollary}[section]
\newtheorem{theo}[coro]{\bf Theorem}
\newtheorem{defi}[coro]{\bf Definition}
\newtheorem{lem}[coro]{\bf Lemma}
\newtheorem{rem}[coro]{\bf Remark}
\newtheorem{lemma}[coro]{\bf Lemma}
\title{ \large \textbf{ Paraconsistent models of Zermelo-Fraenkel set theory}}
\author{Aldo Figallo-Orellano\footnote{E-mail: \texttt{aldofigallo@gmail.com}}\,\,  and Juan Sebasti\'an Slagter\footnote{E-mail:  \texttt{juan.slagter@uns.edu.ar}}\\ [2mm] %
{\small Departamento de Matem\'atica, Universidad Nacional del Sur (UNS),}\\
{\small  Bah\'ia Blanca, Argentina}}
\date{}
\begin{document}

\maketitle
\begin{abstract}

 
In this paper, we build Fidel-structures valued models following the methodology developed for Heyting-valued models; recall that Fidel structures are not algebras in the universal algebra sense. Taking  models that verify Leibniz law, we are able to prove that all set-theoretic axioms of ZF are valid over these models. The proof is strongly based on the existence of paraconsistent models of Leibniz law. In this setting, the difficulty of having algebraic paraconsistent  models of law for formulas with negation  using the standard interpretation map is discussed, showing that the existence of models of Leibniz law is essential to getting models for ZF\footnote{ It is worth mentioning that the authors presented the content of this paper in a Brazilian congress in 2020, see \cite{FS}.}.   
 
\end{abstract}

\section{Introduction}

Paraconsistency is the study of logic systems having a negation $\neg$  which is not explosive, that is, there exist formulas $\alpha$ and $\beta$ in the language of the logic such that $\beta$ is not derivable from the contradictory set $\{\alpha, \neg\alpha\}$.  These systems are typically called non-trivial systems. There are several approaches to paraconsistency in the literature since the introduction in 1948 of Jaskowski’s system of Discussive logic such as Relevant logics, Adaptive logics, Many-valued logics, and many others. The well-known $3$-valued logic  of Paradox (LP) was introduced by  Priest  with the aim of formalizing the philosophical perspective underlying  Priest and  Sylvan's Dialetheism. As it is well-known, the main thesis behind Dialetheism is that there are true contradictions, that is, that some sentences can be both true and false at the same time and in the same way, see \cite{GP}. 

On the other side of paraconsistency, we have the C-systems $C_n$ and $C_\omega$ that are among the best-known contributions of da Costa, his students and  collaborators. In the early 1960's, these systems were introduced in the da Costa's Habilitation thesis; in particular, for $C_\omega$ da Costa proceeded axiomatically preserving the positive part of intuitionistic logic, and changing the axioms for negation, \cite{dC}. The motivation of inventing these systems was to have  non-explosive systems; as a causal consequence, these systems were not congruential for formulas with negation, so that the systems remained without semantics for several years. Afterward, Fidel presented semantics for  $C_n$ and $C_\omega$ by means of presenting a novel algebraic-relational structures in order to give a Adequacy Theorems for these logics w.r.t. those structures in the   early 1970's, \cite{F1}.  Nowadays, these structure are called Fidel structures and it is important to note  that they are not algebras in the  universal algebra sense. It turned out later that indeed  da Costa's systems are not algebraizable in the Blok-Pigozzi's method, see, for instance, \cite{OFP}. 

Recall that Fidel structures are pairs $\langle {\bf A}, \{N_x\}_{x\in A} \rangle$ where $\bf A$ is a {\em generalized Heyting algebra} and  $N_x$ is a set of all possible negations of $x\in A$. This kind of semantics was presented to Paraconsistent Nelson's logic by Odintsov, \cite[Section 3]{Odintsov}; in this case, the logic is algebraizable, but not congruential for formulas with negation.

Focusing on the primary aim of this note, let us recall that Boolean-valued models of Zermelo-Fraenkel set theory (ZF) were introduced by Scott,  Solovay and  Vop\u{e}nka in 1965;  the development of this theory  can be found in Bell's book, \cite{Bell}. With these models, it is possible to prove  the validity of   set-theoretic axioms of ZF. Now, taking  Heyting algebra instead of Boolean algebra, we can construct Heyting-valued models where the proof the validity of axioms of ZF on these models is obtained from  adapting the Boolean case, see  \cite{Bell1}. Other  set theories can be presented such as quantum and  fuzzy ones using appropriate lattice-valued models.  More recently, L\"owe and Tarafder presented the class of reasonable implication algebras in order to construct algebraic-valued models that validate all axioms of the negation-free fragment of ZF, \cite{BT}; in this paper,  the importance of Leibniz law was settled down in Section 4. In the literaturee, there are some generalization of algebraic-valued models -- see the paper of L\"owe {\em et al.}   \cite{BT1}-- and  several approaches of paraconsistent set theories with models build over algebraic-like models, which  have no relation with our presentation. On the other hand,  Priest and Ferguson presented paraconsistent models for the arithmetic, see \cite{Pri,Fer}.   In 2020, Figallo-Orellano and Slagter presented  models for da Costa's paraconsistent set theory via models built over Fidel structures, \cite{FS}.  


The principal purpose of this note is to show that Zermelo-Fraenkel set theory has paraconsistent models built over Fidel  structures for $C_\omega$.  To this end, we start by building  Fidel-structure valued models  following the methodology developed for Heyting-valued models. Subsequently, we  take  models that verify Leibniz law and later prove that  such models exist, see the important Remark \ref{important}. Studying Leibniz law on the algebraic setting, we observe that  the law is not verified over  a $\mathbf{H}_3^\ast$-valued model, where $\mathbf{H}_3^\ast$ is the three-valued Heyting algebra with dual pseudo-complement; furthermore, it is possible to see that at least one of the axioms of ZF is not valid over this model using the standard interpretation map, see Section \ref{S4.1}. In contrast, taking a model over the saturated   Fidel-structure over three-element chain, we can see that Leibniz law is verified; and what is more important, this model is paraconsistent. Finally, we present a proof  that Zermelo-Fraenkel set-theoretic axioms are valid over special Fidel-structure valued models; specifically, the ones that verify Leibniz law.

\section{Fidel's semantics for da Costa $C_\omega$ logic }\label{preli}

In this section, we will briefly summarize the main background about Fidel's strutures for $C_\omega$ to be used in the rest of the paper.   Let us start considering the  signature $\Sigma=\{\wedge,\vee,\to,\neg\}$, and the language  $\mathfrak{Fm}$, or set of formulas, over the denumerable set of propositional variables $Var$. The logic $C_\omega$ is defined by the following set of axiom schemas and the rule {\em Modus Ponens} (\cite{dC}):

\begin{itemize}

\item[\rm (A1)] $\alpha \to (\beta \to \alpha)$,

\item[\rm (A2)] $(\alpha \to (\beta \to \gamma)) \to ((\alpha \to \beta) \to (\alpha \to \gamma))$,

\item[\rm (A3)] $(\alpha \wedge \beta) \to \alpha$,
\item[\rm (A4)] $(\alpha \wedge \beta) \to \beta$,
\item[\rm (A5)] $\alpha \to (\beta \to(\alpha \wedge \beta))$,
\item[\rm (A6)] $\alpha \to (\alpha \vee \beta)$,
\item[\rm (A7)] $\beta \to (\alpha \vee \beta)$,
\item[\rm (A8)] $(\alpha \to \gamma) \to ((\beta \to \gamma) \to ((\alpha \wedge \beta) \to \gamma))$,
\item[\rm (A9)] $\alpha\lor \lnot \alpha$,
\item[\rm (A10)] $\lnot\lnot \alpha\lthen \alpha$.
\end{itemize}

 The notion  of derivation of a formula $\alpha$ in $C_\omega$ is defined as usual.  We say that $\alpha$ is derivable from $\Gamma$ in $C_\omega$, denoted by $\Gamma\vdash\alpha$, if there exists a derivation of $\alpha$ from $\Gamma$ in $C_\omega$.  If $\Gamma=\emptyset$ we denote $\vdash\alpha$; in this case, we say that $\alpha$ is a theorem of $C_\omega$. 

Now, recall that an algebra   ${\bf A}=\langle A,\vee,\wedge,\to,0,1\rangle$ is said to be a {\em Heyting algebra} if the reduct $\langle A,\vee,\wedge,0,1\rangle$  is a bounded distributive lattice and for any $a,b\in A$ the value of $a\to b $  is a pseudo-complement of $a$ with respect to $b$; i.e.,  the greatest element of the set $\{z\in A: a\wedge c\leq b\}$. As a more general case, we say that ${\bf B}=\langle B,\vee,\wedge,\to,1\rangle$ is a {\em generalized Heyting algebra} if  $\langle B,\vee,\wedge,1\rangle$  is a distributive lattice with the greatest element $1$ and $\to$ is defied as Heyting case. This last class of algebras is called  {\em Relatively pseudo-complemented lattices} in \cite[Chapter IV]{RA} and {\em Implicative Lattices} in \cite{Odintsov}.

\begin{defi}\label{defestruc} A $ C_\omega$-structure is a  system $\langle {\bf A},\{N_x\}_{x\in A}\rangle$ where $\bf A$ is a generalized Heyting algebra and $\{N_x\}_{x\in A}$ is a family of sets of $A$ such that the following conditions hold for every $x\in A$
\begin{itemize}
  \item[\rm (i)] for very $x\in A$ there is $x'\in N_x$ such that  $x\vee x'=1$,
  \item[\rm (ii)] for every $x'\in N_x$ there is $x''\in N_{x'}$ such that $x''\leq x$.
\end{itemize}

\end{defi}

In what follows,   we sometimes write  $\langle {\bf A},N\rangle$ instead of $\langle {\bf A},\{N_x\}_{x\in A}\rangle$. As example of $ C_\omega$-structure, we can take a generalized Heyting algebra $A$ and the set $N_x^s=\{y\in A:x\vee y=1\}$. The structure $\langle {\bf A},\{N_x^s\}_{x\in A}\rangle$ will be said to be a saturated $C_\omega$-structure.

For a given set of formulas $\Gamma$ in $ C_\omega$, we will consider the binary relation between formulas as follows:

\begin{center}
$\alpha\equiv_\Gamma\beta$ iff $\Gamma \vdash(\alpha\to\beta)\wedge(\beta\to\alpha).$ 
\end{center}

Having in mind the positive axioms of $C_\omega$, (A1) to (A8), we have that $\equiv_\Gamma$ is a congruence with respect to the connectives  $\vee$, $\wedge$ and $\to$. With $|\alpha|_\Gamma$ we denote the class of $\alpha$ under $\equiv_\Gamma$ and $\mathcal{L}_\omega$ denotes the set of all classes. We can define the operations $\vee$, $\wedge$ and $\to$ on  $\mathcal{L}_\omega$  as follows: $|\alpha \# \beta|_\Gamma=|\alpha\#\beta|_\Gamma$ with $\#\in \{\wedge,\vee,\to \}$. Thus, it is clear that $\langle \mathcal{L}_\omega,\wedge,\vee,\to,1\rangle$ is a generalized Heyting algebra with the greatest element $1=| \alpha \to \alpha|_\Gamma$. To extend the latter algebra to $ C_\omega$-structure,  let us  define the set $N_{|\alpha|}$ for each formula $\alpha$ as follows: $N_{|\alpha|_\Gamma} = \{|\neg\beta|_\Gamma: \beta \equiv_\Gamma \alpha\}$. It is not hard to see that $\langle\mathcal{L}_\omega, \{N_{|\alpha|_\Gamma}\}_{\alpha\in \mathfrak{Fm}}\rangle$ is a $ C_\omega$-structure that we will call Lindenbaum structure. 

The idea of taking this kind of Lindenbaum structure is presented by Fidel in \cite{F1} and \cite{F2}; and, it was adapted to Paraconsistent Nelson’s Logic by Odintsov, see \cite[Section 3]{Odintsov} 

We will say that a $ C_\omega$-structure $\langle {\bf A},\{N_x\}_{x\in A}\rangle$ is  a substructure of a $ C_\omega$-structure $\langle {\bf B},\{N'_x\}_{x\in B}\rangle$ if (i) $\bf A$ is a subalgebra of  $\bf B$ and (ii) $N_x\subseteq N'_x$ holds for $x\in A$. It is easy to see that every $ C_\omega$-structure $\langle {\bf A},\{N_x\}\rangle$ is a substructure of the saturated one $\langle {\bf A},\{N_x^s\}\rangle$ defined before.

Besides, we say that a function $v: \mathfrak{Fm}\to \langle {\bf A},\{N_x\}_{x\in A}\rangle$ is a $C_\omega$-valuation if the following conditions hold:
\begin{itemize}
  \item[\rm (v1)] $v(\alpha)\in A$ where $\alpha$ is an  atomic formula,
  \item[\rm (v2)] $v(\alpha\#\beta)=v(\alpha)\# v(\beta)$ where $\#\in\{\wedge,\vee,\to\}$,
  \item[\rm (v3)] $v(\neg\alpha)\in N_{v(\alpha)}$ and $v(\neg\neg\alpha)\leq v(\alpha)$. 
\end{itemize}

For us, a formula $\alpha$ will be semantically valid  if for every $ C_\omega$-structure $\langle {\bf A},N\rangle$ and every valuation $v$ on the structure, the condition $v(\alpha)=1$ holds; and in this case, we denote $\vDash\alpha$. Moreover, we write  $\Gamma\vDash\alpha$ if every $ C_\omega$-structure $\langle {\bf A},N\rangle$ and every valuation $v$ on the structure, the condition $v(\beta)=1$ for every $\beta\in\Gamma$ implies $v(\alpha)=1$. Thus, we have the following Theorem which is the strong version given in \cite[Theorem 5]{F1}.

\begin{theo}{\rm \cite[Theorem 6.3]{OFP}}\label{teo4} Let $\Gamma \cup \{\alpha\}$ be a set of formulas of $C_\omega$. Then, $\Gamma\vdash\alpha$  if only if $\Gamma \vDash\alpha$
\end{theo}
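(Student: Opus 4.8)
The plan is to prove the two implications separately: soundness, $\Gamma\vdash\alpha\Rightarrow\Gamma\vDash\alpha$, by induction on derivations, and completeness, $\Gamma\vDash\alpha\Rightarrow\Gamma\vdash\alpha$, contrapositively via the Lindenbaum structure. For soundness, fix an arbitrary $C_\omega$-structure $\langle\mathbf{A},N\rangle$ and a valuation $v$ with $v(\beta)=1$ for every $\beta\in\Gamma$, and induct on the length of a derivation of $\alpha$ from $\Gamma$. Hypotheses lying in $\Gamma$ are handled by assumption. If $\alpha$ is one of the positive axioms (A1)--(A8), it takes value $1$ because $\mathbf{A}$, being a generalized Heyting algebra, models positive intuitionistic logic, so each such scheme is a valid identity. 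If $\alpha$ is (A9), $\beta\vee\neg\beta$, then $v(\neg\beta)\in N_{v(\beta)}$ by (v3) and condition (i) of Definition~\ref{defestruc} yields $v(\beta)\vee v(\neg\beta)=1$. If $\alpha$ is (A10), $\neg\neg\beta\to\beta$, then, using that in a generalized Heyting algebra $a\to b=1$ iff $a\le b$, this reduces exactly to the clause $v(\neg\neg\beta)\le v(\beta)$ built into (v3). For the Modus Ponens step, $v(\beta)=1$ and $v(\beta\to\gamma)=1$ give $v(\beta)\le v(\gamma)$, hence $v(\gamma)=1$.

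For completeness, assume $\Gamma\nvdash\alpha$; the aim is to exhibit a $C_\omega$-structure and a valuation sending every formula of $\Gamma$ to $1$ but not $\alpha$. Take the Lindenbaum structure $\langle\mathcal{L}_\omega,\{N_{|\beta|_\Gamma}\}_{\beta\in\mathfrak{Fm}}\rangle$ and the canonical map $v(\beta)=|\beta|_\Gamma$. Three things must be checked. First, that $\langle\mathcal{L}_\omega,\{N_{|\beta|_\Gamma}\}\rangle$ is indeed a $C_\omega$-structure: condition (i) holds because for any $\gamma\equiv_\Gamma\beta$ one has $|\beta|_\Gamma\vee|\neg\gamma|_\Gamma=|\gamma\vee\neg\gamma|_\Gamma=1$ by (A9); condition (ii) holds because $|\neg\neg\gamma|_\Gamma\in N_{|\neg\gamma|_\Gamma}$ and $|\neg\neg\gamma|_\Gamma\le|\gamma|_\Gamma=|\beta|_\Gamma$ by (A10). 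Second, that $v$ is a $C_\omega$-valuation: (v1) and (v2) are immediate since $\equiv_\Gamma$ is a congruence for $\wedge,\vee,\to$ and the operations on $\mathcal{L}_\omega$ are those induced on representatives, while (v3) holds because $|\neg\beta|_\Gamma\in N_{|\beta|_\Gamma}$ by the very definition of that set (taking the representative $\beta$) and $|\neg\neg\beta|_\Gamma\le|\beta|_\Gamma$ by (A10). Third, the standard fact that $|\beta|_\Gamma=1$ iff $\Gamma\vdash\beta$, proved using (A1) and Modus Ponens in one direction and Modus Ponens against $\delta\to\delta$ in the other. Together these give $v(\beta)=1$ for every $\beta\in\Gamma$ while $v(\alpha)=|\alpha|_\Gamma\ne1$, so $\Gamma\nvDash\alpha$, which is the required contrapositive.

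The only genuinely delicate point is the treatment of negation in the completeness direction. Because $\equiv_\Gamma$ is a congruence only for the positive connectives and not for $\neg$, the class $|\neg\beta|_\Gamma$ is not a function of $|\beta|_\Gamma$, and it is precisely this failure of congruentiality that is absorbed by letting $N_{|\beta|_\Gamma}$ be the whole \emph{set} $\{|\neg\gamma|_\Gamma:\gamma\equiv_\Gamma\beta\}$ rather than a single value. Verifying that this set-valued assignment satisfies (i) and (ii) of Definition~\ref{defestruc} and is compatible with clause (v3) of the valuation is the crux of the argument; the remainder parallels the classical Lindenbaum--Tarski construction for positive intuitionistic logic.
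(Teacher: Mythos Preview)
The paper does not give its own proof of this theorem; it is stated with a citation to \cite[Theorem~6.3]{OFP} (and flagged as a strengthening of \cite[Theorem~5]{F1}). What the paper does contain is the description of the Lindenbaum structure in the paragraphs just before the statement, and your completeness argument follows that construction exactly. Your two-part proof---soundness by induction on derivations, completeness via the canonical valuation into the Lindenbaum structure---is correct and is the standard argument for Fidel semantics; it simply fills in the details the paper delegates to the cited references.

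One point of care in your soundness step for (A9): as literally written, condition (i) of Definition~\ref{defestruc} asserts only the \emph{existence} of some $x'\in N_x$ with $x\vee x'=1$, not that every element of $N_x$ has this property, so the inference ``$v(\neg\beta)\in N_{v(\beta)}$, hence $v(\beta)\vee v(\neg\beta)=1$'' would not follow on that reading. The intended reading, however, is that $N_x\subseteq\{y:x\vee y=1\}$ for every $x$; this is exactly what the paper presupposes when it asserts, immediately after the definition, that every $C_\omega$-structure is a substructure of the saturated one. Under that intended reading your argument goes through without change.
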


In order to see that $C_\omega$ is a paraconsistent logic, we will consider the structure 

$${\cal M}_3=\langle H_3,N_0=\{1\}, N_\frac{1}{2}=\{1\},N_1=\{0,\frac{1}{2},1\} \rangle$$ 

where  $H_3=(\{0,\frac{1}{2}, 1\}, \wedge, \vee, \to, 0,1)$ is a $3$-valued G\"odel algebra (or $3$-valued Heyting algebra) where $\wedge$ and $\vee$ are infimum and supremum on the three-element chain and $\to$ is given through the Table \ref{table:conecctivesP1}. Now, let us consider a valuation $v$ such that $v(\alpha)= \frac{1}{2}$, $v(\neg \alpha)=1$ and $v(\beta)=0$. So, from Theorem \ref{teo4}, we have $\not\vdash (\neg \alpha \wedge \alpha)\to \beta$ and taking into account the (meta-)deduction theorem, we have $\{\neg \alpha,  \alpha\} \not\vdash \beta$.

\begin{table}[t]
\centering

\begin{tabular}{|c||ccc|}
\hline
$\rightarrow$ & \bf0 & $\bf \frac{1}{2}$ & \bf1 \\ 
\hline
\hline
\bf0  & $1$ & $1$ & $1$  \\
$\bf \frac{1}{2}$  & $0$ & $1$ & $1$ \\
\bf1  & $0$ & $\frac{1}{2}$ & $1$ \\ \hline
\end{tabular}
\caption{Table of $\to$ for  $3$-valued G\"odel algebra}
\label{table:conecctivesP1}
\end{table}

In the paper \cite{OFP}, non-algebraizable extensions of $C_\omega$ were displayed through adding the following axioms:

\begin{itemize}
  \item [(G$_n$)] $ (\alpha_1\to \alpha_2)\vee \cdots \vee (\alpha_{n-2}\to \alpha_{n-1})$,
  \item [(L)] $(\beta \to \alpha)\vee (\alpha\to \beta) $.
\end{itemize}

The logics $C_{\omega,n}$ and $C_{\omega,\infty}$ are obtained from the axiomatic for $C_\omega$ plus (G$_n$) and (L), respectively. Using Fidel-structures for  $C_{\omega,n}$ and $C_{\omega,\infty}$, it was determined that they are decidable systems and verify the following Adequacy Theorem:

\begin{theo}{\rm \cite[Theorem 6.4 and 6.5]{OFP}} Let $\Gamma \cup \{ \alpha\}$ be a set of formulas of $C_{\omega,n}$ ($C_{\omega,\infty}$). Then, $\Gamma\vdash_{C_{\omega,\infty}(C_{\omega,\infty})}\alpha$\,\, if only if\,\, $\Gamma \vDash_{\langle {\bf A},N\rangle}\alpha$\,\, for every $C_{\omega}$-structure $\langle {\bf A},N\rangle$ for  $C_{\omega,n}$ ($C_{\omega,\infty}$).
\end{theo}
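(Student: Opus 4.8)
The plan is to establish the two halves of the biconditional separately, treating the passage from $C_\omega$ to $C_{\omega,n}$ (resp. $C_{\omega,\infty}$) as a conservative refinement of the argument behind Theorem~\ref{teo4}. Here a $C_\omega$-structure \emph{for} $C_{\omega,n}$ (resp. $C_{\omega,\infty}$) is a $C_\omega$-structure $\langle\mathbf{A},N\rangle$ in which every substitution instance of the schema $(\mathrm{G}_n)$ (resp. of prelinearity $(\mathrm{L})$: $(a\to b)\vee(b\to a)=1$) receives value $1$ under every valuation, equivalently whose generalized Heyting reduct satisfies the corresponding lattice identity. With this convention soundness is almost immediate and the real work lies in completeness.

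For the soundness direction, ``$\Gamma\vdash\alpha\Rightarrow\Gamma\vDash\alpha$'', I would argue by induction on the length of a derivation. The positive axioms (A1)--(A8) and the negation axioms (A9), (A10) are valid in \emph{every} $C_\omega$-structure by the soundness half of Theorem~\ref{teo4}, hence \emph{a fortiori} in the subclass of structures for the extended logic; every instance of the new schema $(\mathrm{G}_n)$ (resp. $(\mathrm{L})$) evaluates to $1$ precisely because that is what it means to be a structure \emph{for} that logic. Finally \emph{Modus Ponens} preserves the value $1$: if $v(\beta)=1$ and $v(\beta\to\gamma)=v(\beta)\to v(\gamma)=1$, then $v(\beta)\le v(\gamma)$ by residuation in the generalized Heyting reduct, whence $v(\gamma)=1$. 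So any valuation sending all of $\Gamma$ to $1$ sends $\alpha$ to $1$.

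For completeness, ``$\Gamma\vDash\alpha\Rightarrow\Gamma\vdash\alpha$'', I would mimic the Lindenbaum construction of Section~\ref{preli} inside the extended calculus. Assume $\Gamma\not\vdash\alpha$. Since $\equiv_\Gamma$ uses only (A1)--(A8), which are still available, it remains a congruence for $\wedge,\vee,\to$, so the quotient $\mathcal{L}$ is a generalized Heyting algebra; putting $N_{|\beta|_\Gamma}=\{\,|\neg\delta|_\Gamma : \delta\equiv_\Gamma\beta\,\}$ turns it into a $C_\omega$-structure exactly as before, conditions (i) and (ii) of Definition~\ref{defestruc} coming from (A9) and (A10). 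The new observation is that, because $(\mathrm{G}_n)$ (resp. $(\mathrm{L})$) is now a theorem schema, \emph{every} substitution instance of it belongs to the top class of $\mathcal{L}$; since the quotient map $\beta\mapsto|\beta|_\Gamma$ is onto $\mathcal{L}$, this says exactly that the reduct $\mathcal{L}$ satisfies the defining condition of the relevant subclass, i.e.\ the Lindenbaum structure is a $C_\omega$-structure \emph{for} $C_{\omega,n}$ (resp. $C_{\omega,\infty}$). The canonical assignment $v(\beta)=|\beta|_\Gamma$ is a $C_\omega$-valuation --- (v1) and (v2) are built in, and (v3) holds because $|\neg\beta|_\Gamma\in N_{|\beta|_\Gamma}$ by construction and $|\neg\neg\beta|_\Gamma\le|\beta|_\Gamma$ by (A10). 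Then $v(\gamma)=1$ for each $\gamma\in\Gamma$ (as $\Gamma\vdash\gamma$) while $v(\alpha)\ne 1$ (as $\Gamma\not\vdash\alpha$), so $\Gamma\not\vDash_{\langle\mathbf{A},N\rangle}\alpha$ for this very structure, which is the contrapositive of what was to be shown.

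The step I expect to be the genuine obstacle is the one just highlighted: spelling out precisely which lattice-theoretic identity on the generalized Heyting reduct corresponds to the chain schema $(\mathrm{G}_n)$, and then checking both that the syntactic presence of all its instances forces the Lindenbaum reduct into the intended subclass (a bounded-linearity/height condition) and, conversely, that membership in this subclass makes all those instances valid. Everything else is a direct adaptation of the proof of Theorem~\ref{teo4}, made painless by the fact that $(\mathrm{G}_n)$ and $(\mathrm{L})$ are negation-free and therefore never interact with the negation families $\{N_x\}_{x\in A}$.
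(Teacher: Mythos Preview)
The paper does not prove this theorem at all: it is simply cited from \cite[Theorems~6.4 and~6.5]{OFP} and stated without argument, so there is no ``paper's own proof'' to compare against.

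That said, your approach is the natural one and is almost certainly what \cite{OFP} does: reuse the soundness/completeness machinery for $C_\omega$ (Theorem~\ref{teo4}), observing that the added schemas $(\mathrm{G}_n)$ and $(\mathrm{L})$ are negation-free and therefore live entirely in the generalized Heyting reduct. Soundness is immediate by the definition of ``$C_\omega$-structure for $C_{\omega,n}$ (resp.\ $C_{\omega,\infty}$)'', and for completeness the Lindenbaum structure built from $\equiv_\Gamma$ automatically validates every instance of the new schema because those instances are theorems and the quotient map is onto. One small remark: the ``genuine obstacle'' you flag in your last paragraph is not really an obstacle. You do not need to identify an independent lattice-theoretic characterization of the subclass (bounded height, prelinearity, etc.); it suffices to \emph{define} the class of structures for the extension as those validating the schema, and then surjectivity of $\beta\mapsto|\beta|_\Gamma$ gives you membership of the Lindenbaum structure directly, with no detour through algebraic identities.
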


\section{  Fidel-structure valued  models}

In this section, we will present Fidel-structure valued  models  as adaptation of Boolean-valued models, \cite{Bell}. To this end, we start by considering the $C_{\omega}$-structure $\langle {\bf A},N\rangle$ where $\bf A$ is a complete lattice; and in this case, we say that $\langle {\bf A},N\rangle$ is a complete $C_{\omega}$-structure.

We fix a model of set theory $\mathbf{V}$ and a completed $C_{\omega}$-structure $\langle {\bf A},N\rangle$. Let us construct a universe of {\em names} by transfinite recursion: 
 
$$ {\mathbf{V}_\xi}^{\langle {\bf A}, N\rangle}=\{x: x\, \textrm{\rm  a function and }\, ran(x)\subseteq A \, \,\textrm{\rm and }\,  dom(x)\subseteq \mathbf{V}_\zeta^{\langle {\bf A}, N\rangle}\, \textrm{\rm for some}\,  \zeta< \xi \} $$

$$ {\mathbf{V}}^{\langle {\bf A}, N\rangle}=\{x: x\in {\mathbf{V}_\xi}^{\langle {\bf A}, N\rangle}\, \textrm{\rm  for some } \xi \} $$

The class ${\mathbf{V}}^{\langle {\bf A}, N\rangle}$ is called the $C_{\omega}$-structure valued model over $\langle {\bf A},N\rangle$. Let us observe that we only need the set $A$ in order to define  ${\mathbf{V}_\xi}^{\langle {\bf A}, N\rangle}$.  By ${\cal L}_\in$, we denote the first-order language of set theory which  consists of only the propositional connectives $\{\to, \wedge,\vee, \neg\}$ of the $C_\omega$ and two binary predicates $\in$ and $\approx$. We expand this language by adding all  the elements of ${\mathbf{V}}^{\langle {\bf A}, N\rangle}$; the expanded language will denoted ${\cal L}_{\langle A, N\rangle}$.

{\bf Induction principles}.  The sets 

$$\mathbf{V}_\zeta=\{x: x\subseteq \mathbf{V}_\xi, \, \textrm{ for some}\, \xi<\zeta\}$$

are definable for every ordinal $\xi$ and then, every set $x$ belongs to $\mathbf{V}_\alpha$ for some $\alpha$. So, this fact induces a function $rank(x)=$ least ordinal $\xi$ such that $x\in \mathbf{V}_\xi$. Since $rank(x)<rank(y)$ is well-founded we induce a {\em principle of induction on rank}: let $\Psi$ be a property over sets. Assume, for every set $x$ if $\Psi(y)$ holds for every $y$ such that $rank(y)<rank(x)$, then $\Psi(x)$ holds. Thus, $\Psi(x)$ for every $x$. From the latter, Induction Principles (IP) holds in ${\mathbf{V}}^{\langle {\bf A}, N\rangle}$. Assume  the following or every $x\in {\mathbf{V}}^{\langle {\bf A}, N\rangle}$: if $\Psi(y)$ holds for every $y\in dom(x)$, then $\Psi(x)$ holds. Hence, $\Psi(x)$ holds for every $x\in {\mathbf{V}}^{\langle {\bf A}, N\rangle}$.   By simplicity, we note every set $u\in {\mathbf{V}}^{\langle {\bf A}, N\rangle}$ by its name $u$ of ${\cal L}_{\langle {\bf A}, N\rangle}$. Besides, we will write $\varphi(u)$ instead of $\varphi(x/u)$. Now, we are going to define a valuation by induction on the complexity of a closed formula in ${\cal L}_{\langle {\bf A}, N\rangle}$ as follows:

\begin{defi}\label{str}

 For a given complete $C_\omega$-structure $\langle {\bf A}, N\rangle$,  the mapping $||\cdot||:{\cal L}_{\langle {\bf A}, N\rangle}\to \langle {\bf A}, N\rangle $ is defined as follows:

\begin{center}

$||u\in v ||^{\langle {\bf A}, N\rangle} =\bigvee\limits_{x\in dom(v)} (v(x) \wedge ||x \approx u ||^{\langle {\bf A}, N\rangle})  $; \\ [3mm]

$||u\approx v ||^{\langle {\bf A}, N\rangle} =\bigwedge\limits_{x\in dom(u)} (u(x)) \to ||x\in v ||^{\langle {\bf A}, N\rangle}) \wedge \bigwedge\limits_{x\in dom(v)} (v(x) \to ||x\in u ||^{\langle {\bf A}, N\rangle})$;\\ [3mm]

$||\varphi \# \psi||^{\langle {\bf A}, N\rangle}  = ||\varphi||^{\langle {\bf A}, N\rangle}  \tilde{\#} ||\psi||^{\langle {\bf A}, N\rangle} $, for every $\#\in \{\wedge,\vee, \to\}$;\\[3mm]

$||\neg \alpha||^{\langle {\bf A}, N\rangle}\in N_{||\alpha||^{\langle {\bf A}, N\rangle}}$ and $||\neg\neg\alpha||^{\langle {\bf A}, N\rangle}\leq ||\alpha||^{\langle {\bf A}, N\rangle}$; \\ [2mm]

$||\exists x\varphi||^{\langle {\bf A}, N\rangle} = \bigvee\limits_{{u\in \mathbf{V}}^{\langle {\bf A}, N\rangle}} ||\varphi (u)||^{\langle {\bf A}, N\rangle}$ and $||\forall x\varphi||^{\langle {\bf A}, N\rangle} = \bigwedge\limits_{{u\in \mathbf{V}}^{\langle {\bf A}, N\rangle}} ||\varphi (u)||^{\langle {\bf A}, N\rangle}$.

\

$||\varphi||^{\langle {\bf A}, N\rangle}$ is called the {\bf truth-value} of the sentence $\varphi$ in the language ${\cal L}_{\langle {\bf A}, N\rangle}$ in the $C_\omega$-structure-valued model over  $\langle {\bf A}, N\rangle$.

\end{center}
\end{defi}

\begin{defi}
A sentence $\varphi$ in the language ${\cal L}_{\langle {\bf A}, N\rangle}$  is said to be valid in ${\mathbf{V}}^{\langle {\bf A}, N\rangle}$, which  is denoted by ${\mathbf{V}}^{\langle {\bf A}, N\rangle} \vDash \varphi$, if $||\varphi||^{\langle {\bf A}, N\rangle}=1$.

\end{defi}

In is important to note that for every completed $C_\omega$-structure $\langle {\bf A}, N\rangle$, the element $\bigwedge\limits_{x\in A} x$ is the first element of $A$ and so, $\bf A$ is a complete Heyting algebra; we denote this element by ''$0$''. 

  Now, we  present a lemma that will be useful in the following:

\begin{lemma}\label{tecnico1}
For a given completed $C_\omega$-structure $\langle {\bf A}, N\rangle$. Then:
\begin{itemize}
\item[\rm (i)]   $|| u \approx u ||^{\langle {\bf A}, N\rangle}=1$;
\item[\rm (ii)] $u(x)\leq || x\in u||^{\langle {\bf A}, N\rangle}$ for every $x\in dom(u)$; 
\item[\rm (iii)] $||u\approx v||^{\langle {\bf A}, N\rangle}=||v\approx u||^{\langle {\bf A}, N\rangle}$, for every $u,v\in {\mathbf{V}}^{\langle {\bf A}, N\rangle}$;
\item[\rm (iv)] $||u\approx v||^{\langle {\bf A}, N\rangle}\wedge ||v\approx w||^{\langle {\bf A}, N\rangle}\leq ||u\approx w||^{\langle {\bf A}, N\rangle} $;
\item[\rm (iv)] $||u\approx v||^{\langle {\bf A}, N\rangle}\wedge ||u \in w||^{\langle {\bf A}, N\rangle}\leq ||v\in w||^{\langle {\bf A}, N\rangle} $.
\end{itemize}
\end{lemma}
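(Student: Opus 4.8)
The plan is to establish the five items of Lemma~\ref{tecnico1} essentially by adapting the Boolean-valued proof from Bell's book, the key point being that only the positive (generalized Heyting) operations $\wedge,\vee,\to$ and the lattice order are involved, so the negation components $N_x$ of the $C_\omega$-structure play no role. Items (i)--(iv) should be proved simultaneously by induction on $\mathrm{rank}$ (or, equivalently, by the induction principle on names available in $\mathbf{V}^{\langle\mathbf{A},N\rangle}$), since the definition of $||u\approx v||$ refers recursively to $||x\in v||$ and $||x\in u||$, which in turn unfold into $||y\approx x||$ for names of strictly lower rank.

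First I would prove (ii): by definition $||x\in u||=\bigvee_{y\in dom(u)}(u(y)\wedge||y\approx x||)$, and for $x\in dom(u)$ one term of this join is $u(x)\wedge||x\approx x||$; by the inductive hypothesis (i) at lower rank, $||x\approx x||=1$, so this term equals $u(x)$ and hence $u(x)\le||x\in u||$. Next, (i): to show $||u\approx u||=1$ I must show $\bigwedge_{x\in dom(u)}(u(x)\to||x\in u||)=1$, i.e. each $u(x)\le||x\in u||$, which is exactly (ii). For (iii), symmetry of $\approx$ is immediate from the definition, since the defining meet is visibly symmetric in $u$ and $v$. The transitivity item (iv) is the first place real work appears: from $a:=||u\approx v||$ and $b:=||v\approx w||$ I need $a\wedge b\le||u\approx w||$, i.e. for each $x\in dom(u)$, $a\wedge b\le u(x)\to||x\in w||$, and symmetrically for $dom(w)$. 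Using the residuation law $c\le(p\to q)$ iff $c\wedge p\le q$ in the generalized Heyting algebra, this reduces to $u(x)\wedge a\wedge b\le||x\in w||$; since $a\le u(x)\to||x\in v||$ one gets $u(x)\wedge a\le||x\in v||=\bigvee_{y\in dom(v)}(v(y)\wedge||y\approx x||)$, and then distributing $b$ over this join and pushing it through, together with $b\le v(y)\to||y\in w||$ and the inductive hypothesis (iv) applied to the triple $(x,y,?)$ at lower rank, yields the bound. This is the step I expect to be the main obstacle, because the bookkeeping of ranks and the careful use of infinite distributivity (valid since $\mathbf{A}$ is a complete Heyting algebra, as noted just before the lemma) must be arranged so that every appeal to the inductive hypothesis is to names of strictly smaller rank.

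Finally, the last item (the extensionality/substitution property $||u\approx v||\wedge||u\in w||\le||v\in w||$) follows from the earlier ones without further induction: unfolding $||u\in w||=\bigvee_{y\in dom(w)}(w(y)\wedge||y\approx u||)$, it suffices by distributivity to bound each term $||u\approx v||\wedge w(y)\wedge||y\approx u||$; using symmetry (iii) and transitivity (iv), $||y\approx u||\wedge||u\approx v||\le||y\approx v||$, so the term is $\le w(y)\wedge||y\approx v||\le\bigvee_{y\in dom(w)}(w(y)\wedge||y\approx v||)=||v\in w||$. I would remark throughout that the completeness of $\mathbf{A}$ is what makes the arbitrary joins and meets in the definition of $||\cdot||$ well-defined and what licenses the infinite distributive laws, and that the result is purely about the generalized-Heyting reduct, so it transfers verbatim from the Boolean/Heyting-valued setting; the novelty of the $C_\omega$-structure enters only later, when negation is treated.
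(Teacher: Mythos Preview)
Your proposal is correct and is precisely the standard Bell argument that the paper defers to; the paper's own proof is a one-line citation to the intuitionistic case in \cite{Bell1}, so your sketch is in fact more detailed than what appears there. Your observation that only the Heyting reduct of $\langle\mathbf{A},N\rangle$ is used is exactly right---the paper's passing mention of ``Leibniz law'' in its proof is superfluous for this lemma, since only the atomic predicates $\in$ and $\approx$ occur and no negation is involved.
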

\begin{proof}
It has the same proof as that of the intuitionistic case using Leibniz law and the fact that for every closed formula $\phi$ of ${\cal L}_{\langle {\bf A}, N\rangle}$, we have $||\phi||^{\langle {\bf A}, N\rangle}\in {\bf A}$, see for instance \cite{Bell1}.
\end{proof}

\section{Leibniz’s law and its models}

In the classical and intuitionistic set theory, we have that the names represent  objects and if we have equivalent objects, they would have to possess the same properties. This is known as {\em indiscernibility of identicals} and it could be considered as Leibniz law by the following axiom:
 $$ u \approx v   \wedge \varphi(u) \to \varphi(v) $$

In the next, we will take  $\langle {\bf A}, N\rangle$ a complete $C_\omega$-structures such that  $\mathbf{V}^{\langle {\bf A}, N\rangle}$  satisfies Leibniz law; in this case, we will call them {\em Leibniz model}. 

Taking in mind the problem of seeing if such models really exit, we can take complete $C_\omega$-structures ${\cal B}=\langle {\bf B}, N\rangle$ where ${\bf B}$ is a complete Boolean algebra and every set a following singleton $N_x=\{\sim x\}$, for every $x\in B$, being $\sim$ the Boolean negation. Then, it is clear that every ${\cal B}$ validates every axiom of  $C_\omega$; moreover, $\mathbf{V}^{\cal B}$ validates every axiom of ZF but $\mathbf{V}^{\cal B}$ is a consistent model. In the following, we will analyze this matter.

\subsection{Algebraic case}\label{S4.1}

It is worth mentioning that  Leibniz law does not has  algebraic models for formulas with negation. To see this,  we start by considering the Heyting algebra with dual pseudocomplement following   $\mathbf{H}_3^\ast=\langle \{0,\frac{1}{2}, 1\},\wedge,\vee,\to, \neg, 1\rangle$  where the $\to$ and $\neg$ through the Table \ref{tableG3'}, see \cite{Sanka}. Now, let us consider  ${\mathbf{V}}^{\mathbf{H}_3^\ast}$  Dual-Heyting-valued model over $\mathbf{H}_3^\ast$. Let $w\in {\mathbf{V}}^{\mathbf{H}_3^\ast}$  and let $\psi(x)$ be the formula ``$w\in x$'' where $x$ is a free variable. Taking  $v=\{\langle w, 1\rangle\}$, $u=\{\langle w, \frac{1}{2}\rangle\}$, we have that  $||u\approx v||^{\mathbf{H}_3^\ast}= \frac{1}{2}$ and so $|| u \approx v ||^{\mathbf{H}_3^\ast}\leq || \psi(u) \to \psi(v)||^{\mathbf{H}_3^\ast} $ holds. But, since  $|| \psi(v)||^{\mathbf{H}_3^\ast}=1$ and $|| \psi(u)||^{\mathbf{H}_3^\ast}=\frac{1}{2}$, we have that  $||\neg \psi(u)||^{\mathbf{H}_3^\ast}=1$ and $||\neg \psi(v)||^{\mathbf{H}_3^\ast}=0$, and therefore the law is not verified. 

\begin{table}[t]
\centering
\begin{tabular}{|c||ccc|}
\hline
$\rightarrow$ & \bf0 &  $\bf \frac{1}{2}$ & \bf1 \\ 
\hline
\hline
\bf0  & $1$ & $1$ & $1$  \\
$\bf \frac{1}{2}$  & $0$ & $1$ & $1$ \\
\bf1  & $0$ & $\frac{1}{2}$ & $1$ \\ \hline
\end{tabular}
\qquad
\begin{tabular}{|c||c|}
\hline
 & $\neg$ \\ 
 \hline
 \hline
\bf0 & $1$ \\
$\bf \frac{1}{2}$  & $1$ \\
\bf1 & $0$ \\ \hline
\end{tabular}
\caption{Tables of connectives $\rightarrow$ and $\neg$ in $\mathbf{H}_3^\ast$}
\label{tableG3'}
\end{table}

The problem of the non-existence of algebraic models of the law leads  to the impossibility  to prove the validity of at least one axiom of ZF for formulas with negation; indeed, if we take the formula $\phi(x):=\neg \psi(x)$, then it is not hard to see that the axiom ``Separation'' is not valid. 

It worth mentioning that  it is possible to see that there is a logic associated with the matrix $\langle \mathbf{H}_3^\ast, \{1\}\rangle$ which is an extension of $C_\omega$, \cite[Section 3]{O2008}; i.e., this matrix validates all axiom of $C_\omega$.

\subsection{$C_\omega$-structure based on three-element-chain Heyting algebra}\label{rem}

In order to see that there exist paraconsistent  models of Leibniz law, we can consider structures with the following requirements:  $1\in N_x$ for all $x\in A$  and $ N_1=A$. Now, let us consider the $C_{\omega}$-structure-valued model $\mathbf{V}^{\langle {\bf A}, N\rangle}$  over $\langle {\bf A},N\rangle$ such that, for every closed formula $\psi$, we define $||\neg \psi ||^{\langle {\bf A}, N\rangle} =1$ and    $||\neg \neg \psi ||^{\langle {\bf A}, N\rangle} =|| \psi ||^{\langle {\bf A}, N\rangle}$ where the formula $\psi$ is not the form $\neg \phi$ for any closed formula  $\phi$. To see that this condition works well for formulas with more negation, just to bear in mind  that $||\neg \neg \neg \psi ||^{\langle {\bf A}, N\rangle} =||\neg \psi ||^{\langle {\bf A}, N\rangle}=1$ and  $||\neg \neg \neg \neg \psi ||^{\langle {\bf A}, N\rangle} =||\neg \neg \psi ||^{\langle {\bf A}, N\rangle}=|| \psi ||^{\langle {\bf A}, N\rangle}$ hold. In general, we can consider the formula $\neg_e \psi$, where $\neg \cdots \neg \psi$ has an even number  of  negations in front of $\psi$, and $\psi$ is not the form $\neg \phi$; then $||\neg_e \psi ||^{\langle A, N\rangle} =|| \psi ||^{\langle {\bf A}, N\rangle}$. But if the formula $\neg_o \psi$, where $\neg \cdots \neg \psi$ has an odd number of  negations in front of $\psi$, and $\psi$ is not the form $\neg \phi$; then $||\neg_o \psi ||^{\langle {\bf A}, N\rangle} =1$.  In this case, we will say  $\mathbf{V}^{\langle {\bf A}, N\rangle}$ is a standard Leibniz model over a complete saturated $\langle {\bf A},N\rangle$.

For instance, let us again consider the following saturated $C_\omega$-structure (see Section \ref{preli}):

$${\cal M}_3=\langle \{0,\frac{1}{2}, 1\},\wedge,\vee,\to, \neg , N_0, N_\frac{1}{2}, N_1\rangle$$ 

where $N_0=N_\frac{1}{2}=\{1\}$, and $N_1=\{0,\frac{1}{2}, 1\}$ and $\to$ is given in Table \ref{tableG3'}. Now, let's now consider the standard Leibniz model ${\mathbf{V}}^{{\cal M}_3}$ and   $w\in {\mathbf{V}}^{{\cal M}_3}$,  and let $\psi(x)$ be the formula ``$w\in x$'' where $x$ is a free variable. Besides, let us consider the set $v=\{\langle w, 1\rangle\}$, $u=\{\langle w, \frac{1}{2}\rangle\}$.  Then, we have  $||\neg \psi(v)||^{{\cal M}_3}=1=||\neg \psi(u)||^{{\cal M}_3}$ and clearly, $||\neg \neg \psi(u)||^{{\cal M}_3}=||\psi(u)||^{{\cal M}_3}$ and $||\neg \neg \psi(u)||^{{\cal M}_3}=||\psi(u)||^{{\cal M}_3}$. Thus,  the  model ${\mathbf{V}}^{{\cal M}_3}$ verifies  Leibniz law. 

It is clear that we could have taken  $||\neg \psi(v)||^{{\cal M}_3}=\frac{1}{2}$ because the pair $(1,\frac{1}{2})$ is one that verifies Leibniz law. In general, taking a model that verify the law, we do not know which is the real value of  the valuation of any formula with negation  but we will known that it is the right one. Another interesting aspect of   ${\mathbf{V}}^{{\cal M}_3}$ is that it is a paraconsistent model as we have seen in Section \ref{preli}. Moreover,  ${\mathbf{V}}^{{\cal M}_3}$  is a model for the logic $C_{\omega,3}$, and later it will be shown that it is  model of ZF. Now, we are in conditions of offering a proof of the following Theorem.

\begin{theo}\label{LL}
Let $\langle {\bf A}, N \rangle$ be a complete saturated $C_\omega$-structure such that  $\mathbf{V}^{\langle {\bf A}, N\rangle}$ is a standard Leibniz model, then $\mathbf{V}^{\langle {\bf A}, N\rangle}$ verifies the Leibniz law.
\end{theo}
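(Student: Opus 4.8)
The plan is to prove by induction on the complexity of the formula $\varphi$ that for all names $u, v \in \mathbf{V}^{\langle {\bf A}, N\rangle}$ we have
$$\|u \approx v\|^{\langle {\bf A}, N\rangle} \wedge \|\varphi(u)\|^{\langle {\bf A}, N\rangle} \leq \|\varphi(v)\|^{\langle {\bf A}, N\rangle},$$
where the induction is genuinely a simultaneous induction over all pairs of names (so the atomic cases must be handled for arbitrary names, which is exactly what Lemma \ref{tecnico1}(iv)--(v) provides). First I would record the base cases: for $\varphi(x)$ of the form $x \in w$ or $w \in x$ or $x \approx w$ (and the symmetric variants), the desired inequality is precisely the content of Lemma \ref{tecnico1}(iii), (iv) and (v), possibly combined with transitivity and symmetry of $\approx$; the case $x \in x$-type atoms reduces to these by the same lemma. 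This settles the atomic step using only facts already established in the excerpt.

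Next come the propositional connectives $\wedge, \vee, \to$. Since $\|\cdot\|$ commutes with $\tilde\#$ for $\# \in \{\wedge,\vee,\to\}$ and $\mathbf{A}$ is a (generalized) Heyting algebra, these follow from the induction hypothesis by the standard algebraic manipulations: for $\wedge$ and $\vee$ by monotonicity, and for $\to$ by the residuation law $a \wedge (b \to c) \leq d$ iff $a \wedge b \leq (\text{suitable})$, exactly as in the Boolean/Heyting-valued case in \cite{Bell1,Bell}. The quantifier cases $\exists x\,\varphi$ and $\forall x\,\varphi$ also go through verbatim from the intuitionistic treatment, since $\|u \approx v\|$ does not depend on the bound variable and infinite distributivity in the complete Heyting algebra $\mathbf{A}$ lets one push the meet with $\|u\approx v\|$ inside the $\bigvee$ or $\bigwedge$ over names.

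The crucial and genuinely new case is negation, $\varphi(x) = \neg\phi(x)$, and this is where the hypothesis that $\mathbf{V}^{\langle {\bf A}, N\rangle}$ is a \emph{standard} Leibniz model over a complete saturated structure is used. Here one cannot argue by congruence, since $N$ is not a function. Instead I would invoke the explicit description of $\|\neg(\cdot)\|$ in the standard Leibniz model: writing $\neg\phi = \neg_o\chi$ or $\neg_e\chi$ where $\chi$ is not itself a negation, one has $\|\neg_o\chi(u)\|^{\langle {\bf A}, N\rangle} = 1 = \|\neg_o\chi(v)\|^{\langle {\bf A}, N\rangle}$ in the odd case, so the inequality is trivial ($x \wedge 1 \le 1$); and in the even case $\|\neg_e\chi(u)\|^{\langle {\bf A}, N\rangle} = \|\chi(u)\|^{\langle {\bf A}, N\rangle}$ and likewise for $v$, so the inequality reduces to the induction hypothesis applied to the strictly less complex formula $\chi$ (which has no leading negation). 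The main obstacle is purely bookkeeping: one must be careful that the ``not of the form $\neg\phi$'' normalization is well-defined and that the complexity measure driving the induction decreases when one strips the block of leading negations — i.e. the induction should be set up on a complexity ordinal that counts connectives but treats a maximal leading negation-block as a single step, or equivalently one first reduces every formula to its $\neg_e/\neg_o$-normal form. Once that is fixed, every case closes, and since the displayed inequality in particular gives $\|u \approx v\| \wedge \|\varphi(u)\| \leq \|\varphi(v)\|$ for all $\varphi$, taking the value to be $1$ under the appropriate hypotheses yields $\|u \approx v \wedge \varphi(u) \to \varphi(v)\|^{\langle {\bf A}, N\rangle} = 1$, which is exactly the validity of Leibniz law in $\mathbf{V}^{\langle {\bf A}, N\rangle}$.
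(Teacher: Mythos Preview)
Your proposal is correct and follows essentially the same approach as the paper: induction on the structure of $\varphi$, with the negation-free cases handled by the standard Heyting-valued argument (which the paper simply cites as ``the intuitionistic case'' while you spell it out), and the negation case dispatched via the parity trick $\|\neg_o\chi\|=1$, $\|\neg_e\chi\|=\|\chi\|$ defining the standard Leibniz model. Your observation about needing a complexity measure that treats a maximal leading negation-block as a single step is a genuine bookkeeping point that the paper leaves implicit.
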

\begin{proof} Let $\varphi(x)$ be a formula of ${\cal L}_{\langle {\bf A}, N\rangle}$ and $u,v\in \mathbf{V}^{\langle {\bf A}, N \rangle}$.  We will make the proof by  induction on the structure of formula. Taking in mind the intuitionistic case, it is possible to see that for the negation-free formula $\varphi(x)$, we have  $||u \approx v||^{\langle {\bf A}, N \rangle}\leq  ||\varphi(u) \to \varphi(v)||^{\langle {\bf A}, N \rangle}$. Now, let us suppose that $\varphi(x)$ is $\neg \psi(x)$ such that $\psi(x)$ is not the form $\neg \phi(x)$ where $\phi(x)$ is any formula, then $||\neg \psi(u)||^{\langle {\bf A}, N \rangle}=1=||\neg \psi(v)||^{\langle A, N \rangle}$. Thus, it is clear that $||u \approx v||\leq  ||\varphi(u) \to \varphi(v)||^{\langle {\bf A}, N \rangle}$. Let us now suppose that  $\varphi(x)$ is $\neg \neg \psi(x)$ such that $\psi(x)$ is not the form $\neg \phi$, then $||\neg \neg  \psi(u)||^{\langle {\bf A}, N \rangle}= || \psi(u)||^{\langle {\bf A}, N \rangle}$ and $||\neg \neg \psi(v)||^{\langle {\bf A}, N \rangle}=|| \psi(v)||^{\langle {\bf A}, N \rangle}$. By inductive hypothesis, we have $\varphi(x)$ verifies the Leibniz law. Now, if $\varphi(x)$ is a formula such that it is the form $\neg \neg \cdots \neg \psi(x)$, we can proceed as the two last cases in view of the remarks made at the beginning of this sub-section. 
\end{proof}

 \

In the following, we  present an important remark that will show  us how work the models of this section.

\begin{rem}\label{important}
 \
 
\noindent Theorem \ref{LL} evidences that there is a way to provide paraconsistent  models that verify Leibniz law, but there could exist others  depending on the choice of the complete saturated $C_\omega$-structure. Furthermore, it is important to note that the sentences $\sim\exists x(x\in x)$ and $\sim\exists xy(x\not\approx y)$ hold in the models of this Section \ref{rem}. This is an unexpected consequence, but we have to take into account that the negation is  paraconsistent. So, we think that this is not a problem on the paraconsistent setting. Recall that this section is presented to see that there exist paraconsistent models of Leibniz law, but taking a Fidel structure based on a  suitable Heyting algebra may have better options. We think it is worth exploring these suitable kind of models of the law.
\end{rem}

We will adopt the following notation, for every formula $\varphi(x)$ and  every $u\in \mathbf{V}^{\langle A, N\rangle}$:  $\exists x\in u \varphi(x)= \exists x(x\in u \wedge \varphi(x))$ and $\forall x\in u \varphi(x)= \forall x(x\in u \to \varphi(x))$.

Now, we  present the following Lemma that will use in the rest of paper.

\begin{lemma}\label{BQ}
Let $\langle {\bf A}, N\rangle$ be a   complete $C_\omega$-structure such that $\mathbf{V}^{\langle {\bf A}, N\rangle}$ is Leibniz model. Then, for every formula $\varphi(x)$ and every $u\in \mathbf{V}^{\langle {\bf A}, N\rangle}$ we have: 
 
 $$|| \exists x\in u \varphi(x)||^{\langle A, N \rangle}= \bigvee\limits_{x\in dom(u)} (u(x) \wedge || \varphi(x)||^{\langle A, N \rangle})$$ 
 
 and 
 $$|| \forall x\in u \varphi(x)||^{\langle A, N \rangle}= \bigwedge\limits_{x\in dom(u)} (u(x) \to || \varphi(x)||^{\langle A, N \rangle}).$$
\end{lemma}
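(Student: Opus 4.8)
The plan is to prove the two identities by unwinding the definitions of the bounded quantifiers, $||\exists x\in u\,\varphi(x)||=||\exists x(x\in u\wedge\varphi(x))||$ and $||\forall x\in u\,\varphi(x)||=||\forall x(x\in u\to\varphi(x))||$, and then comparing the resulting joins/meets over all of $\mathbf{V}^{\langle{\bf A},N\rangle}$ with the corresponding join/meet over $dom(u)$ only. For the existential case, one direction is immediate: since $dom(u)\subseteq\mathbf{V}^{\langle{\bf A},N\rangle}$, and by Lemma \ref{tecnico1}(ii) we have $u(x)\leq||x\in u||$ for each $x\in dom(u)$, so $u(x)\wedge||\varphi(x)||\leq||x\in u||\wedge||\varphi(x)||=||x\in u\wedge\varphi(x)||\leq\bigvee_{w\in\mathbf{V}^{\langle{\bf A},N\rangle}}||w\in u\wedge\varphi(w)||=||\exists x\in u\,\varphi(x)||$, whence $\bigvee_{x\in dom(u)}(u(x)\wedge||\varphi(x)||)\leq||\exists x\in u\,\varphi(x)||$.

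For the reverse inequality in the existential case, I would fix an arbitrary $w\in\mathbf{V}^{\langle{\bf A},N\rangle}$ and show $||w\in u||\wedge||\varphi(w)||\leq\bigvee_{x\in dom(u)}(u(x)\wedge||\varphi(x)||)$. Expanding $||w\in u||=\bigvee_{x\in dom(u)}(u(x)\wedge||x\approx w||)$ and using distributivity of the (complete) lattice ${\bf A}$, it suffices to show for each $x\in dom(u)$ that $u(x)\wedge||x\approx w||\wedge||\varphi(w)||\leq u(x)\wedge||\varphi(x)||$, i.e. that $||x\approx w||\wedge||\varphi(w)||\leq||\varphi(x)||$. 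This is precisely an instance of the Leibniz law, which holds by hypothesis since $\mathbf{V}^{\langle{\bf A},N\rangle}$ is a Leibniz model: $||x\approx w||\wedge||\varphi(w)||\leq||\varphi(w)\to\varphi(x)||\wedge||\varphi(w)||\leq||\varphi(x)||$ (the last step by the adjunction/modus-ponens property of $\to$ in the generalized Heyting algebra, namely $a\wedge(a\to b)\leq b$). Taking the join over $w$ gives $||\exists x\in u\,\varphi(x)||\leq\bigvee_{x\in dom(u)}(u(x)\wedge||\varphi(x)||)$, completing that case. The universal case is dual: $||\forall x\in u\,\varphi(x)||=\bigwedge_{w\in\mathbf{V}^{\langle{\bf A},N\rangle}}||w\in u\to\varphi(w)||\leq\bigwedge_{x\in dom(u)}||x\in u\to\varphi(x)||\leq\bigwedge_{x\in dom(u)}(u(x)\to||\varphi(x)||)$ using $u(x)\leq||x\in u||$ again together with antitonicity of $\to$ in the first argument; and for the reverse one fixes $w$, expands $||w\in u||=\bigvee_{x\in dom(u)}(u(x)\wedge||x\approx w||)$, and shows $\bigwedge_{x\in dom(u)}(u(x)\to||\varphi(x)||)\leq||w\in u||\to||\varphi(w)||$ by the adjunction, reducing to $u(x)\wedge||x\approx w||\wedge\bigwedge_{y}(u(y)\to||\varphi(y)||)\leq||\varphi(w)||$, which follows from $u(x)\wedge(u(x)\to||\varphi(x)||)\leq||\varphi(x)||$ and then the Leibniz law $||x\approx w||\wedge||\varphi(x)||\leq||\varphi(w)||$ (using symmetry of $\approx$, Lemma \ref{tecnico1}(iii)).

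The main obstacle, and the only place where anything beyond routine lattice manipulation is used, is the appeal to the Leibniz law to get $||x\approx w||\wedge||\varphi(w)||\leq||\varphi(x)||$; everything else is the standard Boolean/Heyting-valued bounded-quantifier computation (cf. \cite{Bell,Bell1}). One should be slightly careful that $\varphi$ may itself contain negations, so the inequality $||x\approx w||\wedge||\varphi(w)||\leq||\varphi(w)\to\varphi(x)||$ must be read as an instance of the Leibniz law \emph{as validated in the model}, rather than derived from congruence properties of $||\cdot||$ — which is exactly the point emphasised in Section \ref{S4.1} and Remark \ref{important}. Since completeness of ${\bf A}$ guarantees all the displayed joins and meets exist and infinite distributivity holds in a complete Heyting algebra (and ${\bf A}$ is complete Heyting by the remark following Definition \ref{str}), the interchange of $\wedge$ with arbitrary $\bigvee$ used above is legitimate, and the proof goes through.
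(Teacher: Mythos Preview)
Your proof is correct and is precisely the standard Heyting-valued bounded-quantifier computation that the paper has in mind: the paper's own proof simply says that the argument is the same as in the intuitionistic case \cite{Bell1}, using Leibniz law together with Lemma \ref{tecnico1} and the fact that all truth-values lie in the complete Heyting algebra ${\bf A}$. You have merely spelled that argument out in detail, including the one nonroutine step where Leibniz law is invoked to pass from $||x\approx w||\wedge||\varphi(w)||$ to $||\varphi(x)||$.
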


\begin{proof}
It  has the same proof that the intuitionistic case using   Leibniz law, Lemma \ref{tecnico1}. and recalling that for every sentence $\varphi$ and every complete $C_\omega$-structure $\langle {\bf A}, N\rangle$,  the interpretation $|| \varphi||^{\langle A, N \rangle}$ belongs to the Heyting algebra ${\bf A}$, see for instance \cite{Bell1}.
\end{proof}

\section{Zermelo-Fraenkel Set Theory}

The basic system of paraconsistent set theory  is called $ZFC_\omega$ and consists of first order  version of $C_\omega$ over the first-order signature ${\Theta}_\omega$ which contains an equality predicate\, $\approx$ \, and a binary predicate $\in$.

\begin{defi}\label{IZF}

The system $ZFC_\omega$ is the first order theory  obtained from the  logic $C_\omega$     over $\Theta_\omega$  by adding the following set-theoretic axiom schemas:

 \begin{axioms}

\AxiomItem{(Extensionality)}{\forall x \forall y [\forall z(z\in x \leftrightarrow z\in y)\to (z \approx y)]},
\AxiomItem{(Pairing)}{\forall x \forall y \exists w\forall z [z\in w \leftrightarrow (z \approx x \vee z \approx y) ]},
\AxiomItem{(Colletion)}{\forall x [(\forall y\in x\exists z\phi(y,z))\to \exists w\forall y\in x \exists z\in w  \phi(y,z)]},
\AxiomItem{(Powerset)} {\forall x \exists w \forall z [z\in w \leftrightarrow \forall y\in z(y\in x)]},
\AxiomItem{(Separation)} {\forall x \exists w \forall z[z\in w \leftrightarrow (z\in x \wedge \phi(z))]},
\AxiomItem{(Empty set)} {\exists x\forall z[z\in x \leftrightarrow \neg (z\approx z)]},

The set satisfying this axiom is, by extensionality, unique and we refer to it with notation $\emptyset$.

\AxiomItem{(Union)}{\forall x \exists w \forall z[ z\in w \leftrightarrow  \exists y\in x(z\in y)]},

\AxiomItem{(Infinity)}{\exists x [\emptyset\in x \wedge \forall y\in x (y^+\in x)]},

From union, pairing and extensionality, we can note by $y^+$ the unique set $y\cup \{y\}$. 

\AxiomItem{(Induction)}{\forall x [( \forall y\in x \phi(y))\to \phi(x)]\to \forall x\phi(x)}.
\end{axioms}

\end{defi}
\

The last nine axioms are usually used to define the  Zermelo-Fraenkel Set Theory, \cite{Bell}.  Now, we will show some technical results  to be used in the rest of paper.

\begin{defi}
Let $\langle {\bf A}, N\rangle$ be  a complete $C_\omega$-substructure. Given collection of sets $\{u_i: i\in I\} \subseteq \mathbf{V}^{\langle {\bf A}, N\rangle}$ and $\{a_i: i\in I \}\subseteq A$, the mixture $\Sigma_{i\in I} a_i\cdot u_i$ is the fucntion $u$ with $dom(u)=\bigcup\limits_{i\in I} dom(u_i)$ and, for $x\in dom(u)$,  $u(x)= \bigvee\limits_{i\in I} a_i \wedge || x\in u_i||^{\langle {\bf A}, N\rangle}$.
\end{defi}

The following result is known as {\em Mixing Lemma} and its proof is exactly the same for intuitionistic case because it is an assertion about positive formulas, see for instance, \cite{Bell,Bell1}. 

\begin{lem}
Let $\langle {\bf A}, N\rangle$ be  a complete $C_\omega$-substructure and let $u$ be the mixture $\Sigma_{i\in I} a_i\cdot u_i$ where $\{u_i: i\in I\} \subseteq \mathbf{V}^{\langle {\bf A}, N\rangle}$ and $\{a_i: i\in I \}\subseteq A$. If $a_i\wedge a_j\leq || u_i \approx u_j||^{\langle {\bf A}, N\rangle}$ for all $i,j\in I$, then $a_i\leq || u_i \approx u||^{\langle {\bf A}, N\rangle}$.  
\end{lem}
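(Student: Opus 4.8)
The plan is to follow the classical Boolean-valued (or intuitionistic Heyting-valued) proof of the Mixing Lemma essentially verbatim, since — as the excerpt repeatedly stresses — this is an assertion purely about positive formulas, so the family $N$ and the negation interpretation never enter. First I would fix $i\in I$ and unfold what needs to be shown: by the definition of $\approx$,
\[
||u_i\approx u||^{\langle {\bf A},N\rangle}=\bigwedge_{x\in dom(u_i)}\bigl(u_i(x)\to ||x\in u||\bigr)\ \wedge\ \bigwedge_{x\in dom(u)}\bigl(u(x)\to ||x\in u_i||\bigr),
\]
so it suffices to prove the two estimates $a_i\le u_i(x)\to ||x\in u||$ for each $x\in dom(u_i)$, and $a_i\le u(x)\to ||x\in u_i||$ for each $x\in dom(u)$. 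Equivalently, using that ${\bf A}$ is a complete Heyting algebra, I need $a_i\wedge u_i(x)\le ||x\in u||$ and $a_i\wedge u(x)\le ||x\in u_i||$.

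For the first inequality: since $x\in dom(u_i)\subseteq dom(u)$, the definition of the mixture gives $u(x)=\bigvee_{j\in I}\bigl(a_j\wedge ||x\in u_j||\bigr)\ge a_i\wedge ||x\in u_i||$. By Lemma \ref{tecnico1}(ii), $u_i(x)\le ||x\in u_i||$, hence $a_i\wedge u_i(x)\le a_i\wedge ||x\in u_i||\le u(x)\le ||x\in u||$ (the last step because $u(x)\le ||x\in u||$, again Lemma \ref{tecnico1}(ii) applied to $u$, as $x\in dom(u)$). For the second inequality: from the definition of $u(x)$ we get $a_i\wedge u(x)=\bigvee_{j\in I}\bigl(a_i\wedge a_j\wedge ||x\in u_j||\bigr)$, using distributivity of the complete Heyting algebra over arbitrary joins. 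For each $j$, the hypothesis $a_i\wedge a_j\le ||u_i\approx u_j||$ together with Lemma \ref{tecnico1}(v) (the statement $||u\approx v||\wedge ||u\in w||\le ||v\in w||$, here in the form swapping the roles and using symmetry from Lemma \ref{tecnico1}(iii)) yields $a_i\wedge a_j\wedge ||x\in u_j||\le ||u_j\approx u_i||\wedge ||x\in u_j||\le ||x\in u_i||$, wait — one must be careful about which variable sits inside $\in$; the relevant instance is $||u_j\approx u_i||\wedge ||x\in u_j||\le ||x\in u_i||$, which follows by the same argument that proves Lemma \ref{tecnico1}(v) (monotonicity/transitivity of $\approx$ and the recursive clause for $\in$). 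Taking the join over $j\in I$ gives $a_i\wedge u(x)\le ||x\in u_i||$, as required.

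Combining the two families of inequalities through the infima in the definition of $||u_i\approx u||$ gives $a_i\le ||u_i\approx u||^{\langle {\bf A},N\rangle}$, completing the proof. I do not expect any genuine obstacle here: the only points requiring mild care are (a) making sure the $\in$-clause lemma (Lemma \ref{tecnico1}(v)) is invoked with the correct variable inside the membership — this is why one needs the symmetry of $\approx$ from Lemma \ref{tecnico1}(iii) — and (b) using that ${\bf A}$ is a \emph{complete} Heyting algebra so that finite meets distribute over the arbitrary joins appearing in the mixture, which is exactly the remark made after Definition \ref{str} that $\bigwedge_{x\in A}x$ is a bottom element and ${\bf A}$ is a complete Heyting algebra. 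Since no formula with negation occurs anywhere, the Fidel-structure data $\{N_x\}$ plays no role, and the argument is identical to Bell's.
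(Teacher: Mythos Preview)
Your argument is correct and is exactly the standard Bell-style proof the paper defers to; the paper itself gives no proof beyond remarking that only positive formulas occur, so the Heyting-valued argument carries over verbatim. The one point worth noting is that the substitution you need, $||u_j\approx u_i||\wedge ||x\in u_j||\le ||x\in u_i||$, replaces the \emph{second} argument of $\in$, whereas the paper's Lemma~\ref{tecnico1} (the second item labelled (iv)) states only substitution in the first argument --- you correctly flag this and observe it follows by the same recursion, which it does.
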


 A set $B$ refines a set $A$ if for all $b\in B$ there is some $a\in A$ such that $b\leq a$. A Heyting algebra $\bf H$ is refinable if for every subset $A\subseteq H$ there exists some anti-chain  $B$ in $H$ that refines $A$ and verifies $\bigvee A =\bigvee B$.

\begin{theo}
Let $\langle {\bf A}, N\rangle$ be a complete $C_\omega$-substructure such that $A$ is  refinable. If $\mathbf{V}^{\langle {\bf A}, N\rangle}\vDash \exists x \psi(x)$, then there is $u\in \mathbf{V}^{\langle {\bf A}, N\rangle}$ such that $\mathbf{V}^{\langle {\bf A}, N\rangle}\vDash \psi(u)$. 
\end{theo}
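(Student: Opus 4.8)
The plan is to mimic the proof of the Maximum Principle for Boolean-valued models (\cite{Bell}), using the refinability of $\bf A$ in place of the Boolean step of splitting an element into a maximal antichain below it. Since $\mathbf{V}^{\langle {\bf A}, N\rangle}\vDash\exists x\psi(x)$ means $\bigvee_{u\in\mathbf{V}^{\langle {\bf A}, N\rangle}}||\psi(u)||^{\langle {\bf A}, N\rangle}=1$, it suffices to produce a single name $v^{*}$ with $||\psi(v^{*})||^{\langle {\bf A}, N\rangle}=1$.

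First I would replace the proper class of names by a set. The collection $S=\{\,||\psi(u)||^{\langle {\bf A}, N\rangle}:u\in\mathbf{V}^{\langle {\bf A}, N\rangle}\,\}$ is a subset of $A$, hence a set in the ground model $\mathbf{V}$, and $\bigvee S=1$; choosing for each $a\in S$ a name $u_{a}$ with $||\psi(u_{a})||^{\langle {\bf A}, N\rangle}=a$ (by Collection in $\mathbf{V}$) yields a set $\{u_{a}:a\in S\}$ of names with $\bigvee_{a\in S}||\psi(u_{a})||^{\langle {\bf A}, N\rangle}=1$. Now apply refinability to $S\subseteq A$: there is an antichain $B$ with $\bigvee B=\bigvee S=1$ such that every $b\in B$ lies below some $a(b)\in S$. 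Setting $v_{b}:=u_{a(b)}$ we get $b\le||\psi(v_{b})||^{\langle {\bf A}, N\rangle}$ for all $b\in B$.

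Next I would form the mixture $v^{*}:=\Sigma_{b\in B}\,b\cdot v_{b}$. Because $B$ is an antichain we have $b\wedge b'=0\le||v_{b}\approx v_{b'}||^{\langle {\bf A}, N\rangle}$ whenever $b\ne b'$, and $b\wedge b=b\le 1=||v_{b}\approx v_{b}||^{\langle {\bf A}, N\rangle}$ by Lemma \ref{tecnico1}; so the hypothesis of the Mixing Lemma is met and it yields $b\le||v_{b}\approx v^{*}||^{\langle {\bf A}, N\rangle}$ for every $b\in B$. Here one checks that $v^{*}$ is a legitimate element of $\mathbf{V}^{\langle {\bf A}, N\rangle}$: its domain is $\bigcup_{b\in B}dom(v_{b})$, a set, and its range is contained in $A$, so its rank is bounded by those of the $v_{b}$.

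Finally I would invoke the Leibniz law, valid in $\mathbf{V}^{\langle {\bf A}, N\rangle}$, in the form $||v_{b}\approx v^{*}||^{\langle {\bf A}, N\rangle}\wedge||\psi(v_{b})||^{\langle {\bf A}, N\rangle}\le||\psi(v^{*})||^{\langle {\bf A}, N\rangle}$ (equivalently, $a\wedge(a\to c)\le c$ applied to $||v_{b}\approx v^{*}||^{\langle {\bf A}, N\rangle}\le ||\psi(v_{b})\to\psi(v^{*})||^{\langle {\bf A}, N\rangle}$). Since $b$ lies below both $||v_{b}\approx v^{*}||^{\langle {\bf A}, N\rangle}$ and $||\psi(v_{b})||^{\langle {\bf A}, N\rangle}$, this gives $b\le||\psi(v^{*})||^{\langle {\bf A}, N\rangle}$ for each $b\in B$, hence $1=\bigvee B\le||\psi(v^{*})||^{\langle {\bf A}, N\rangle}$, i.e.\ $\mathbf{V}^{\langle {\bf A}, N\rangle}\vDash\psi(v^{*})$, which is the conclusion with $u:=v^{*}$. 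The step I expect to require the most care is the first one — the class-to-set reduction, which is the usual reflection argument and is legitimate precisely because $A$ is a set and the metatheory is ZFC — together with the observation that refinability is exactly what is needed here, since without it the Mixing Lemma hypothesis $b\wedge b'\le||v_{b}\approx v_{b'}||^{\langle {\bf A}, N\rangle}$ could fail.
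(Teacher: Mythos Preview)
Your proposal is correct and follows essentially the same route as the paper's own proof: reduce the class of truth values $\{||\psi(u)||:u\in\mathbf{V}^{\langle{\bf A},N\rangle}\}$ to a set, refine it to an antichain, form the corresponding mixture, apply the Mixing Lemma, and conclude via Leibniz law. You are slightly more explicit than the paper about why the antichain condition secures the Mixing Lemma hypothesis (namely $b\wedge b'=0\le||v_{b}\approx v_{b'}||$) and about why $v^{*}$ is a legitimate name, but the argument is identical in substance.
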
 

\begin{proof} Since $A$ is a set, then the collection $X=\{|| \psi(u)||^{\langle {\bf A}, N\rangle}:u\in \mathbf{V}^{\langle {\bf A}, N\rangle} \}$ is also a set. By the Axiom of Choice, there is an ordinal $\alpha$ and a set $\{ u_\zeta : \zeta < \alpha\} \subseteq \mathbf{V}^{\langle {\bf A}, N\rangle} $ such that $X=\{|| \psi(u_\zeta)||^{\langle {\bf A}, N\rangle}:\zeta < \alpha \}$. Hence, $||\exists x \psi(x)||^{\langle {\bf A}, N\rangle}=\bigvee\limits_{\zeta < \alpha} || \psi(u_\zeta)||^{\langle {\bf A}, N\rangle}$. Now let $\{a_i:i\in I\}$ be a refinement of the set $\{|| \psi(u_\zeta)||^{\langle {\bf A}, N\rangle}:\zeta < \alpha \}$ such that $\bigvee\limits_{i\in I} a_i = \bigvee\limits_{\zeta < \alpha }|| \psi(u_\zeta)||^{\langle {\bf A}, N\rangle}$. We can choose for every $a_i$ some $v_i\in  \mathbf{V}^{\langle {\bf A}, N\rangle}$ such that $a_i\leq || \psi(v_i)||^{\langle {\bf A}, N\rangle}$. Now, let $u$ be the mixture $\Sigma_{i\in I} a_i\cdot v_i$. By the Mixing Lemma and Leibniz law,  we have $1=\bigvee \limits_{i\in I} a_i =\bigvee  \limits_{i\in I} \big(a_i \wedge || v_i=u||^{\langle {\bf A}, N\rangle}\big) \leq \bigvee\limits_{i\in I} \big(|| \psi(v_i)||^{\langle {\bf A}, N\rangle} \wedge || v_i= u||^{\langle {\bf A}, N\rangle}\big)\leq || \psi(u)||^{\langle {\bf A}, N\rangle}$ which completes the proof.
\end{proof}

\

Now, given a complete $C_\omega$-substructure $\langle {\bf A}', N'\rangle$ of $\langle {\bf A}, N\rangle$, we have the associated models $\mathbf{V}^{\langle {\bf A}', N'\rangle}$ and $\mathbf{V}^{\langle {\bf A}, N\rangle}$. Then, it is easy to see that $\mathbf{V}^{\langle {\bf A}', N'\rangle}\subseteq \mathbf{V}^{\langle {\bf A}, N\rangle}$.

On the other hand, we say that a formula $\psi$ is {\em restricted} if all quantifiers are of the form $\exists y\in x$ or $\forall y\in x$, then we have:  

\begin{lem}
For any complete $C_\omega$-substructure $\langle {\bf A}', N'\rangle$ of $\langle {\bf A}, N\rangle$ and any restricted negation-free formula $\psi(x_1,\cdots,x_n)$ with variables in  $\mathbf{V}^{\langle {\bf A}', N'\rangle}$, the following equality holds: 
$$||\psi(x_1,\cdots,x_n)||^{\langle {\bf A}', N'\rangle}=||\psi(x_1,\cdots,x_n)||^{\langle {\bf A}, N\rangle}.$$
\end{lem}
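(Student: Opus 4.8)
The plan is to proceed by induction on the structure of the restricted negation-free formula $\psi$, following the standard intuitionistic argument (as in Bell's treatment of Heyting-valued models), with the only new ingredient being the bounded-quantifier clause supplied by Lemma \ref{BQ}. Since $\psi$ is negation-free, the negation clause of Definition \ref{str} never enters, so the whole argument stays inside the Heyting algebra $\mathbf{A}$ (and its subalgebra $\mathbf{A}'$); this is exactly why we may appeal verbatim to the intuitionistic case for the connective and atomic steps.

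First I would dispose of the atomic cases. For $\psi$ of the form $x_i\in x_j$ or $x_i\approx x_j$ with names in $\mathbf{V}^{\langle\mathbf{A}',N'\rangle}$, one shows by a simultaneous induction on $\mathrm{rank}$ (ordered by $\max$ of the ranks of the two names, as in Lemma \ref{tecnico1}) that $||x_i\in x_j||^{\langle\mathbf{A}',N'\rangle}=||x_i\in x_j||^{\langle\mathbf{A},N\rangle}$ and likewise for $\approx$. The key point is that $\mathbf{A}'$ is a \emph{complete} subalgebra of $\mathbf{A}$ — i.e. joins and meets computed in $\mathbf{A}'$ agree with those computed in $\mathbf{A}$ — so the suprema $\bigvee_{x\in dom(v)}$ and infima $\bigwedge_{x\in dom(u)}$ in the defining clauses of $||\cdot\in\cdot||$ and $||\cdot\approx\cdot||$ give the same element whether evaluated in $\mathbf{A}'$ or $\mathbf{A}$; the $\to$ in the $\approx$-clause is also absolute because it is the relative pseudocomplement and $\mathbf{A}'$ is a subalgebra. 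Then the induction hypothesis on names of smaller rank closes the loop. For the propositional connectives $\wedge,\vee,\to$ the step is immediate from clause (v2)-style absoluteness of $\tilde\#$ in a subalgebra together with the induction hypothesis.

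The genuinely new case is the quantifier step, and this is where restrictedness is used. If $\psi$ is $\exists y\in x_i\,\chi$ (the $\forall$ case is dual), then by Lemma \ref{BQ} — applicable since $\mathbf{V}^{\langle\mathbf{A},N\rangle}$ is a Leibniz model — we have $||\exists y\in x_i\,\chi||^{\langle\mathbf{A},N\rangle}=\bigvee_{y\in dom(x_i)}\big(x_i(y)\wedge||\chi||^{\langle\mathbf{A},N\rangle}\big)$, and similarly over $\langle\mathbf{A}',N'\rangle$; here the crucial observation is that $dom(x_i)\subseteq\mathbf{V}^{\langle\mathbf{A}',N'\rangle}$ and $x_i(y)\in A'$ because $x_i\in\mathbf{V}^{\langle\mathbf{A}',N'\rangle}$, so the join ranges over exactly the same index set with the same weights, and by induction hypothesis $||\chi||^{\langle\mathbf{A}',N'\rangle}=||\chi||^{\langle\mathbf{A},N\rangle}$ for each instantiation. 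Completeness of $\mathbf{A}'$ in $\mathbf{A}$ again guarantees the two joins coincide. Note also that $\langle\mathbf{A}',N'\rangle$ inherits the standard-Leibniz / saturated hypotheses needed for Lemma \ref{BQ} — or, more carefully, one only needs Lemma \ref{BQ} applied on each side, which the hypothesis ``$\mathbf{V}^{\langle\mathbf{A},N\rangle}$ is Leibniz'' plus the corresponding fact for the substructure provide.

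The main obstacle I anticipate is purely bookkeeping rather than conceptual: making the simultaneous rank-induction for the atomic $\in$/$\approx$ cases airtight, in particular checking that every name appearing in $dom(v)$ for $v\in\mathbf{V}^{\langle\mathbf{A}',N'\rangle}$ is itself in $\mathbf{V}^{\langle\mathbf{A}',N'\rangle}$ (immediate from the recursive definition of the name hierarchy, since it depends only on $A'$) and that the completeness of the inclusion $\mathbf{A}'\hookrightarrow\mathbf{A}$ is genuinely available — this last point is the one place where ``complete $C_\omega$-substructure'' must be read as carrying closure under the infinitary lattice operations of $\mathbf{A}$, not merely under the finitary ones. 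Once that is pinned down, all three inductive steps are routine and the proof is complete.
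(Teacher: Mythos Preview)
Your proposal is correct and follows the same route as the paper, which simply defers to the intuitionistic case: since $\psi$ is negation-free, only the Heyting-algebra data of $\mathbf{A}'\subseteq\mathbf{A}$ is used, and since $\psi$ is restricted, all quantifiers reduce to joins/meets over domains of names already in $\mathbf{V}^{\langle\mathbf{A}',N'\rangle}$.

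One small point: you route the quantifier step through Lemma~\ref{BQ} and its Leibniz-model hypothesis, but the present lemma carries no such hypothesis, and none is needed. For a \emph{negation-free} $\chi$, Leibniz law $||u\approx v||\wedge||\chi(u)||\leq||\chi(v)||$ already holds purely from the Heyting structure (this is exactly the intuitionistic fact behind Lemma~\ref{tecnico1}), so the bounded-quantifier reduction $||\exists y\in x_i\,\chi||=\bigvee_{y\in dom(x_i)}(x_i(y)\wedge||\chi||)$ is available without invoking the paper's Leibniz-model machinery. Once you drop that detour, your argument is precisely the standard Heyting-valued proof the paper is pointing to.
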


\begin{proof} It is immediate from intuitionistic case, taking into account that  ${\bf A}'$ and ${\bf A}$ are complete Heyting algebras, and $\psi(x_1,\cdots,x_n)$ is a restricted negation-free formula.
\end{proof}

\

Let us now consider the Boolean algebra ${\bf 2}=\langle\{0,1\},\wedge,\vee,\neg,0,1\rangle$ and the natural mapping $\hat{\cdot }:  \mathbf{V}^{\langle {\bf A}, N\rangle} \to \mathbf{V}^{\langle {\bf 2} , N_{\bf 2}\rangle}$ where $N_{\bf 2}=\{(0,1),(1,0)\}$ defined by $\hat{u}=\{\langle \hat{v}, 1\rangle: v\in u\}$. This is well defined by recursion on $v\in dom(u)$. It is clear that $\langle {\bf 2} , N_{\bf 2}\rangle$ is $C_\omega$-substructure of any $\langle {\bf A}, N\rangle$, then the following lemma holds:

\begin{lem}\label{tecnico}
\begin{itemize}
\item[\rm (i)] $|| u\in \hat{v}||=\bigvee\limits_{x\in v} || u\approx \hat{x} ||$ for all $v\in \mathbf{V}$ and $u\in \mathbf{V}^{\langle {\bf A}, N\rangle}$,

\item[\rm (ii)] $u\in v \leftrightarrow \mathbf{V}^{\langle {\bf A}, N\rangle} \vDash \hat{u}\in \hat{v}$ and $u=v \leftrightarrow \mathbf{V}^{\langle {\bf A}, N\rangle} \vDash \hat{u} \approx \hat{v}$,

\item[\rm (iii)] for all $x\in \mathbf{V}^{\langle {\bf 2} , N_{\bf 2}\rangle}$ there exists a unique $v\in \mathbf{V}$ such that $\mathbf{V}^{\langle {\bf 2}, N_{\bf 2}\rangle}\vDash  x\approx \hat{v}$,

\item[\rm (iv)] for any negation-free formula $\psi(x_1,\cdots,x_n)$  and any $x_1, \cdots,x_n\in \mathbf{V}$, we have  $\psi(x_1,\cdots,x_n) \leftrightarrow \mathbf{V}^{\langle {\bf 2} , N_{\bf 2}\rangle} \vDash   \psi(\hat{x_1},\cdots,\hat{x_n})$. Moreover, for any restricted negation-free formula $\phi$, we have $\phi(x_1,\cdots,x_n) \leftrightarrow \mathbf{V}^{\langle {\bf A}, N\rangle} \vDash   \phi(\hat{x_1},\cdots,\hat{x_n})$. 

\end{itemize}

\end{lem}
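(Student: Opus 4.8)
The statement to prove is Lemma \ref{tecnico}, which has four parts (i)--(iv) about the canonical embedding $\hat{\cdot}$ into the two-valued submodel. The plan is to mirror the classical Boolean-valued argument from Bell's book, checking at each stage that the presence of the Fidel-structure negation causes no disruption — which works precisely because every clause here concerns either positive formulas or negation-free formulas, so the troublesome clause $||\neg\alpha|| \in N_{||\alpha||}$ of Definition \ref{str} never enters.

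First I would prove (i) by $\in$-induction on $v \in \mathbf{V}$. Unfolding the definition of $||\cdot \in \cdot||$ and using $\hat{v} = \{\langle \hat{x}, 1\rangle : x \in v\}$, one gets $||u \in \hat{v}|| = \bigvee_{x \in v} (1 \wedge ||\, \hat{x} \approx u\,||) = \bigvee_{x \in v} ||u \approx \hat{x}||$, where symmetry of $\approx$ is Lemma \ref{tecnico1}(iii). Then (ii) follows by simultaneous $\in$-induction on $u, v \in \mathbf{V}$: for the forward direction, $u \in v$ gives $\hat{u}$ as a point of $dom(\hat{v})$ with value $1$, so $||\hat{u} \in \hat{v}|| = 1$ using Lemma \ref{tecnico1}(i)--(ii); for $u = v$ one uses $||\,\hat{u} \approx \hat{u}\,|| = 1$. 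The converse directions use the induction hypothesis together with part (i) and the fact that in the two-valued algebra $\mathbf{2}$ a join equals $1$ only if some disjunct is $1$. Part (iii) is the assertion that $\mathbf{V}^{\langle \mathbf{2}, N_{\mathbf 2}\rangle}$ is, up to the equivalence induced by $||\cdot \approx \cdot||$, just a copy of $\mathbf{V}$; I would prove existence by induction on the rank of names $x \in \mathbf{V}^{\langle \mathbf 2, N_{\mathbf 2}\rangle}$ (for each $y \in dom(x)$ with $x(y) = 1$ pick the corresponding $v_y \in \mathbf{V}$ by induction, and set $v = \{v_y : x(y) = 1\}$), and uniqueness from (ii).

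For (iv), the first assertion is proved by induction on the structure of the negation-free formula $\psi$: atomic cases are exactly (ii); conjunction, disjunction and implication go through because $||\cdot \tilde{\#} \cdot||$ in $\mathbf{2}$ coincides with the classical Boolean operations and, e.g., $a \to b = 1$ in $\mathbf{2}$ iff $a \le b$; the quantifier cases use that $||\exists x \psi||^{\langle \mathbf 2, N_{\mathbf 2}\rangle} = \bigvee_{u} ||\psi(u)||$ over names $u \in \mathbf{V}^{\langle \mathbf 2, N_{\mathbf 2}\rangle}$, which by (iii) may be replaced by a join over $\hat{v}$, $v \in \mathbf{V}$, and then the induction hypothesis applies; the universal case is dual. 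The ``Moreover'' clause for restricted negation-free $\phi$ over the larger structure $\langle \mathbf{A}, N\rangle$ uses the preceding lemma — that restricted negation-free formulas have the same truth value computed in any complete $C_\omega$-substructure as in $\langle \mathbf{2}, N_{\mathbf 2}\rangle$ — to transfer the result from $\mathbf{V}^{\langle \mathbf 2, N_{\mathbf 2}\rangle}$.

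The main obstacle is bookkeeping rather than conceptual: one must be careful that Leibniz law (which holds in the intended models) is genuinely available so that Lemma \ref{tecnico1} applies, and that all formulas appearing really are negation-free or positive so that the interpretation stays inside the Heyting algebra $\mathbf{A}$ and the classical Bell-style induction is legitimate; the quantifier step in (iv), where one must pass from a join over all two-valued names to a join over the $\hat{v}$'s, is where part (iii) is essential and deserves the most care.
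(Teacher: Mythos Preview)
Your proposal is correct and matches the paper's approach: the paper does not give a detailed argument but simply remarks that ``the proof of the last theorem is the same for the intuitionistic case because we consider restricted negation-free formulas,'' and your sketch is precisely that classical Bell-style argument spelled out, with the same observation that negation never intervenes. The only quibble is that part (i) needs no $\in$-induction---your displayed computation already finishes it by direct unfolding---but this does not affect correctness.
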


The proof of the last theorem is the same for the intuitionistic case because we consider restricted negation-free formulas and it will be useful to prove the validity of axiom Infinity.

\subsection{ Validating axioms}

Now, we will  prove the validity of all set-theoretic axiom from $ZFC_\omega$. First,  let us start by considering a fixed model $\mathbf{V}^{\langle {\bf A}, N\rangle}$ over a complete  $C_\omega$-structure $\langle {\bf A}, N\rangle$ and suppose that $\mathbf{V}^{\langle {\bf A}, N\rangle}$ verifies Leibniz law; i.e., it is a Leibniz model. For the sake of brevity and from now on, we will use the notation $||.||$ instead of $||.||^{\langle A, N\rangle}$ without any risk of confusion. Then:

 \
 
\noindent {\bf Extensionality}

Given $x,y\in \mathbf{V}^{\langle {\bf A}, N\rangle}$, then

\begin{eqnarray*}
  ||\forall z (z\in x \leftrightarrow z\in y)|| &= & ||\forall z ((z\in x \to z\in y) \wedge (z\in y \to z\in x) ||\\
&= & \bigwedge\limits_{z\in \mathbf{V}^{\langle A, N\rangle}} (||z\in x|| \to ||z\in y||)\wedge \bigwedge\limits_{z\in \mathbf{V}^{\langle A, N\rangle}} (||z\in y|| \to ||z\in x||) \\
&\leq  & \bigwedge\limits_{z\in dom(x)} (||z\in x|| \to ||z\in y||)\wedge \bigwedge\limits_{z\in dom(y)} (||z\in y|| \to ||z\in x||) \\
&\leq & \bigwedge\limits_{z\in dom(x)} ( x(z) \to ||z\in y||)\wedge \bigwedge\limits_{z\in dom(y)} ( y(z) \to ||z\in x||)  \\
 &= & ||x=y||. \\
\end{eqnarray*}

Thus, we have $||\forall x \forall y\forall z ((z\in x \leftrightarrow z\in y)\to (x=y)) ||$. On the other hand, for any $z\in \mathbf{V}^{\langle A, N\rangle}$ we infer that $||x=y|| \wedge ||z\in x||\leq ||z\in y||$ and so, $||x=y|| \leq ||z\in x||\to ||z\in y||  $. Therefore, $||\forall x \forall y ((x=y) \to \forall z (z\in x \leftrightarrow z\in y)) ||$.

\

\noindent {\bf Pairing} 

Let $u,v\in \mathbf{V}^{\langle {\bf A}, N\rangle}$ and consider the function $w=\{\langle u,1\rangle, \langle v,1\rangle \}$. Thus, we have that $|| z\in w||= (w(u)\wedge ||z=u||) \vee (w(v)\wedge ||z=v||)=  ||z=u||\vee ||z=v||= ||z=u\vee z=v||$.

\

\noindent {\bf Powerset}

Assume $u\in \mathbf{V}^{\langle {\bf A}, N\rangle}$ and suppose $w$ is a function such that $dom(w)=\{f:dom(u)\to A: f\, \hbox{function}\}$ and $w(x)=||\forall y\in x(y\in u)||$. Therefore, 

$$|| v\in w||=\bigvee\limits_{x\in dom(w)} (||\forall y\in x(y\in u)|| \wedge || x=v||)\leq ||\forall y\in v(y\in u)|| .$$

On the other hand,  given $v\in \mathbf{V}^{\langle {\bf A}, N\rangle}$ and considering the function $a$ such that $dom(a)=dom(u)$ and $a(z)=||z\in u|| \wedge ||z\in v ||$. So, it is clear that $a(z)\to ||z\in v ||=1$ for every $z\in dom(a)$, therefore

\begin{eqnarray*}
  ||\forall y\in v(y\in u)||  &= & \bigwedge\limits_{y\in dom(v)} (v(y)\to  || y\in u||)\\
&= &  \bigwedge\limits_{y\in dom(v)} (v(y)\to  (|| y\in u|| \wedge v(y))) \\
&\leq  & \bigwedge\limits_{y\in dom(v)} (v(y)\to  a(y)) \\
&\leq & \bigwedge\limits_{y\in dom(v)} ( v(y) \to ||y\in a||)\wedge \bigwedge\limits_{z\in dom(a)} ( a(z) \to ||z\in v||)  \\
 &= & ||v=a||. \\
\end{eqnarray*}
Since $a(y)\leq ||y\in u||$ for every $y\in dom(a)$, then we have $||\forall y\in a(y\in u)||=1$. Now by construction we have that $a\in dom(w)$ and so,  $ ||\forall y\in v(y\in u)||\leq ||\forall y\in a(y\in u)|| \wedge ||v=a||= w(a) \wedge ||v=a||\leq ||v\in w||$.

\

\noindent {\bf Union} 

Given $u\in \mathbf{V}^{\langle {\bf A}, N\rangle}$ and considering the function $w$ with $dom(w)= \bigcup\limits_{v\in dom(u)} dom(v)$ and $w(x)= \bigvee\limits_{v\in A_x} v(x)$ where $A_x=\{v\in dom(u): x\in dom(v)\}$. Then,

\begin{eqnarray*}
 ||y\in w||  &= &\bigvee\limits_{x\in dom(w)} (||x \approx y|| \wedge \bigvee\limits_{v\in A_x} v(x))\\
&= & \bigvee\limits_{x\in dom(w)}  \bigvee\limits_{v\in A_x} (||x\approx y|| \wedge v(x) )  \\
&= & \bigvee\limits_{v\in dom(u)}  \bigvee\limits_{x\in dom(v)}(||x \approx y|| \wedge v(x) ) \\
&= & || \exists v\in u(y\in v)||.\\
\end{eqnarray*}

\noindent {\bf Separation}

Given $u\in \mathbf{V}^{\langle {\bf A}, N\rangle}$ and supposing $dom(w)=dom(u)$ and $w(x)=||x\in u||\wedge ||\phi(x)||$ then

\begin{eqnarray*}
  || z\in w|| &= & \bigvee\limits_{x\in dom(w)} (||y\in w||\wedge ||\phi(y)||\wedge ||y \approx z||)\\
&\leq & \bigvee\limits_{x\in dom(w)} (||\phi(z)||\wedge ||y \approx z||).  \\
\end{eqnarray*}

Besides, 

\begin{eqnarray*}
 ||\phi(z)||\wedge ||y \approx z|| &= &\bigvee\limits_{y\in dom(u)} (u(y)\wedge ||z \approx y|| \wedge ||\phi(z)||)\\
&\leq & \bigvee\limits_{y\in dom(u)} (||y\in u||\wedge ||z \approx y|| \wedge ||\phi(y)||)  \\
&= & \bigvee\limits_{y\in dom(u)} (w(y) \wedge ||z\approx y||) = ||z\in w||. \\
\end{eqnarray*}

\noindent {\bf Empty set} 

Note that $||u=u||=1$ for all $u\in \mathbf{V}^{\langle {\bf A}, N\rangle}$ and then, $||\neg(u\approx u)||\in N_1$. Therefore, let us consider a function $w\in \mathbf{V}^{\langle {\bf A}, N\rangle}$ such that  $u\in dom(w)$ and $ran(w)\subseteq \{||\neg(u\approx u)||\}$, then it is clear that $||u\in w|| = \bigvee\limits_{x\in dom(w)} (w(x)\wedge ||u\approx x||)=||\neg(u\approx u)||$.

\

\noindent {\bf Infinity} 

Assume the formula $\psi(x)$ is ``$\emptyset\in x \wedge \forall y\in x (y^+\in x)$''. Then, the axiom in question is the sentence $\exists x \psi(x)$. Now, it is clear that the negation-free formula $\emptyset\in x \wedge \forall y\in x (y^+\in x)$ is restricted and certainly $\psi(\omega)$ true. Hence, by Lemma \ref{tecnico} (iv), we get $||\psi(\hat{\omega})||=1$, and so, $||\exists x \psi(x)||=1$.

\

\noindent {\bf  Collection} 

Given $u\in \mathbf{V}^{\langle {\bf A}, N\rangle}$ and $x\in dom(u)$, there exists by Axiom of Choice some ordinal $\alpha_x$ such that $\bigvee\limits_{y \in \mathbf{V}^{\langle {\bf A}, N\rangle}} ||\phi(x,y)||= \bigvee\limits_{y \in \mathbf{V}^{\langle {\bf A}, N\rangle}_{\alpha_x}} ||\phi(x,y)||$. For $\alpha=\{\alpha_x: x\in dom(u)\}$ and $v$ the function with domain  $\mathbf{V}^{\langle {\bf A}, N\rangle}$ and range $\{1\}$, we have:

\begin{eqnarray*}
  ||\forall x\in u\exists y \phi(x,y)  ||  &= & \bigwedge\limits_{x\in dom(u)} (u(x)\to \bigvee\limits_{y\in \mathbf{V}^{\langle A, N\rangle}} ||\phi(x,y)  ||) \\
&= & \bigwedge\limits_{x\in dom(u)} (u(x)\to \bigvee\limits_{y\in \mathbf{V}^{\langle A, N\rangle}_\alpha} ||\phi(x,y)  ||) \\
&=  & \bigwedge\limits_{x\in dom(u)}(u(x) \to ||\exists y\in v \phi(x,y)  ||) \\
 \\
 &= & ||\forall x\in u \exists y\in v \phi(x,y) ||    \\
 &\leq & ||\exists w \forall x\in u \exists y\in w \phi(x,y) ||. \\
\end{eqnarray*}

\noindent {\bf Induction}

Let us suppose $x\in \mathbf{V}^{\langle {\bf A}, N\rangle}$, then we will prove its  validity  by induction on the well-founded relation $y\in dom(x)$. Now, assume that $a=||\forall x\big[(\forall y\in x\psi(y))\to \psi(x)\big]||$.

On the other hand, assume that $a\leq ||\psi(y)||$ for every $y\in dom(x)$.  So, it is clear that $a\leq \bigwedge\limits_{y\in dom(x)} ||\psi(y)|| \leq \bigwedge\limits_{y\in dom(x)} \big(x(y)\wedge ||\psi(y)||\big) = || \forall y \in x \psi(y)||$. But $a\leq ||(\forall y\in x\psi(y))|| \to ||\psi(x)||$. Therefore, $a\leq \big(||(\forall y\in x\psi(y))|| \to ||\psi(x)||\big)\wedge || \forall y \in x \psi(y)||\leq || \psi(x)||$ as required.

\

From the above, we have proved the following Theorem:

\begin{theo}
Let $\langle {\bf A}, N\rangle$ be a  complete $C_\omega$-structure such that $\mathbf{V}^{\langle {\bf A}, N\rangle}$ verifies the Leibniz law (i.e. it is a Leibniz model). Then, the all set-theoretic axioms of $ZFC_\omega$  are valid in $\mathbf{V}^{\langle {\bf A}, N\rangle}$.
\end{theo}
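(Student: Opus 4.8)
The proof amounts to collecting the axiom-by-axiom verifications carried out in this subsection; I will describe the structure of the argument and indicate the one place where the Leibniz-model hypothesis is genuinely needed. Throughout, two standing facts are used: first, completeness of $\langle {\bf A}, N\rangle$ makes ${\bf A}$ a complete Heyting algebra, so that all the joins and meets appearing in Definition \ref{str} exist, $\wedge$ distributes over arbitrary $\bigvee$, and the residuation law for $\to$ is available; second, the assumption that $\mathbf{V}^{\langle {\bf A}, N\rangle}$ is a Leibniz model puts Lemma \ref{tecnico1}, Lemma \ref{BQ}, and the correct behaviour of substitution of names into formulas at our disposal. The plan is to run through each of the nine schemas of Definition \ref{IZF}.

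For the negation-free schemas --- Extensionality, Pairing, Powerset, Union, and Collection --- the strategy is the standard Heyting-valued template. Given names $u, v, \dots$, one exhibits an explicit witness name $w$: for Pairing one takes $w = \{\langle u,1\rangle, \langle v,1\rangle\}$; for Powerset a suitable function with domain $\{f : \mathrm{dom}(u)\to A\}$; for Union the function on $\bigcup_{v\in \mathrm{dom}(u)}\mathrm{dom}(v)$; for Collection a range-$\{1\}$ function over a sufficiently large $\mathbf{V}^{\langle {\bf A}, N\rangle}_\alpha$ obtained from the Axiom of Choice in $\mathbf{V}$. One then computes $||z\in w||$ (or the relevant truth value) directly from Definition \ref{str}, converting bounded quantifiers by Lemma \ref{BQ} and bounding the truth values of $\approx$ and $\in$ by Lemma \ref{tecnico1}. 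Note that even here, although these axioms contain no negation, the substitutivity clauses of Lemma \ref{tecnico1} --- established via Leibniz law --- are what make the chain of inequalities close up.

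For Infinity the plan is different: one observes that the formula $\psi(x)$ expressing ``$\emptyset \in x \wedge \forall y\in x\,(y^+\in x)$'' is restricted and negation-free, that $\psi(\omega)$ holds in $\mathbf{V}$, and then applies Lemma \ref{tecnico}(iv) to get $||\psi(\hat\omega)|| = 1$, whence $||\exists x\,\psi(x)|| = 1$.

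The main obstacle --- and the reason the statement restricts to Leibniz models --- is the schemas whose statement contains negation, namely Empty set, Separation (whose parameter $\phi$ may contain $\neg$), and Induction. For these the intuitionistic computation is unavailable, because $||\neg\chi||$ is only constrained to lie in $N_{||\chi||}$ and need not be a function of $||\chi||$. The resolution is to exploit the hypothesis directly. Since $||u\approx u|| = 1$ by Lemma \ref{tecnico1}(i), the value $||\neg(u\approx u)||$ must belong to $N_1$; hence choosing the empty-set name $w$ with $u\in\mathrm{dom}(w)$ and $\mathrm{ran}(w)\subseteq\{||\neg(u\approx u)||\}$ gives $||u\in w|| = ||\neg(u\approx u)||$, which is exactly what the Empty set axiom demands. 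For Separation one takes $w$ with $\mathrm{dom}(w)=\mathrm{dom}(u)$ and $w(x) = ||x\in u||\wedge||\phi(x)||$, and Leibniz law is precisely what yields $||\phi(z)||\wedge||z\approx y|| \le ||\phi(y)||\wedge||z\approx y||$ even when $\phi$ carries negations, so the two required inequalities between $||z\in w||$ and $||z\in u \wedge \phi(z)||$ both go through. For Induction one runs $\in$-induction along the well-founded relation $y\in\mathrm{dom}(x)$: setting $a = ||\forall x[(\forall y\in x\,\psi(y))\to\psi(x)]||$ and assuming $a\le||\psi(y)||$ for all $y\in\mathrm{dom}(x)$, one gets $a\le||\forall y\in x\,\psi(y)||$, and residuation together with $a\le ||\forall y\in x\,\psi(y)|| \to ||\psi(x)||$ forces $a\le||\psi(x)||$. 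Assembling these nine verifications establishes the theorem.
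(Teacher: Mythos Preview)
Your proposal is correct and follows essentially the same approach as the paper: you summarize the axiom-by-axiom verifications with the same witness names (Pairing via $\{\langle u,1\rangle,\langle v,1\rangle\}$, Powerset via functions $\mathrm{dom}(u)\to A$, Union via $\bigcup_{v\in\mathrm{dom}(u)}\mathrm{dom}(v)$, Collection via a large $\mathbf{V}_\alpha^{\langle {\bf A},N\rangle}$, Separation via $w(x)=||x\in u||\wedge||\phi(x)||$, Empty set via $\mathrm{ran}(w)\subseteq\{||\neg(u\approx u)||\}$, Infinity via Lemma~\ref{tecnico}(iv) applied to $\hat\omega$, and Induction along $y\in\mathrm{dom}(x)$), invoking Lemmas~\ref{tecnico1} and~\ref{BQ} exactly where the paper does. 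Your added commentary isolating the places where the Leibniz-model hypothesis genuinely enters (substitutivity inside Separation when $\phi$ carries $\neg$, and the bounded-quantifier reduction of Lemma~\ref{BQ}) is accurate and slightly sharper than the paper's presentation, but the argument is the same.
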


Now, we define the paraconsistent set theories $ZFC_{\omega,n}$ with ($n<\omega$) ( and   $ZFC_{\omega,\infty}$) from of the first order  version  of $C_{\omega,n}$ (and $C_{\omega,\infty}$, see Section \ref{preli}) over the first-order signature ${\Theta}_\omega$ which contains an equality predicate\, $\approx$ \, and a binary predicate $\in$  by adding the set-theoretic axiom schemas from Definition \ref{IZF}. Then, we have also  proved:

\begin{coro}
Let $\langle {\bf A}, N\rangle$ be a complete $C_{\omega,n}$($C_{\omega,\infty}$)-structure  such that $\mathbf{V}^{\langle {\bf A}, N\rangle}$ verifies the Leibniz law (i.e. it is a Leibniz model). Then, the all set-theoretic axioms of $ZFC_{\omega,n}$ ($ZFC_{\omega,\infty}$)  are valid in $\mathbf{V}^{\langle {\bf A}, N\rangle}$.
\end{coro}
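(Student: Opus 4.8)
The plan is to reduce the statement directly to the Theorem already established for $ZFC_\omega$, with essentially no new work. First I would observe that, by the very definition of the logics $C_{\omega,n}$ and $C_{\omega,\infty}$ and of their Fidel structures recalled in Section~\ref{preli}, every complete $C_{\omega,n}$-structure (respectively, complete $C_{\omega,\infty}$-structure) $\langle {\bf A}, N\rangle$ is in particular a complete $C_\omega$-structure: it is a $C_\omega$-structure whose Heyting reduct $\bf A$ merely satisfies, in addition, the lattice-theoretic conditions that force the validity of (G$_n$) (respectively, of (L)). Hence the construction of the universe of names $\mathbf{V}^{\langle {\bf A}, N\rangle}$, the interpretation map $||\cdot||$ of Definition~\ref{str}, and all the auxiliary results (Lemma~\ref{tecnico1}, Lemma~\ref{BQ}, the Mixing Lemma, Lemma~\ref{tecnico}) are available verbatim, and the hypothesis that $\mathbf{V}^{\langle {\bf A}, N\rangle}$ verifies the Leibniz law is exactly the hypothesis of the Theorem proved just above.

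Second, I would note that the set-theoretic axiom schemas of $ZFC_{\omega,n}$ and of $ZFC_{\omega,\infty}$ are, by definition, precisely the nine schemas listed in Definition~\ref{IZF}, i.e.\ the same schemas as those of $ZFC_\omega$; the only difference between the three theories lies in the underlying first-order calculus. Consequently, applying the preceding Theorem to the complete $C_\omega$-structure $\langle {\bf A}, N\rangle$ (which is a Leibniz model by hypothesis) yields at once that Extensionality, Pairing, Collection, Powerset, Separation, Empty set, Union, Infinity and Induction all receive truth-value $1$ in $\mathbf{V}^{\langle {\bf A}, N\rangle}$, which is the assertion to be proved.

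The only point that requires care --- and the step I would flag as the subtle one --- is to make sure that passing from $C_\omega$ to one of its extensions cannot spoil anything that was used in the validity proofs. This is immediate once it is made explicit: the validity arguments for the ZF axioms in the previous section invoke only the complete Heyting-algebra structure of $\bf A$ (suprema, infima, relative pseudo-complement), the family $N$ through the clause $||\neg\alpha||\in N_{||\alpha||}$, and the Leibniz law; none of these is weakened, and in fact a complete $C_{\omega,n}$-structure carries strictly \emph{more} structure (extra identifications in $\bf A$), never less, while ``validity of an axiom'' means its truth-value equals the top element, a condition insensitive to enlarging the stock of provable propositional schemes. Dually, one may add for completeness that $\mathbf{V}^{\langle {\bf A}, N\rangle}$ is also a model of the propositional logic $C_{\omega,n}$ (respectively $C_{\omega,\infty}$) in the sense of the Adequacy Theorem of Section~\ref{preli}, so that together with the set-theoretic schemas just validated it is a genuine model of the whole theory $ZFC_{\omega,n}$ (respectively $ZFC_{\omega,\infty}$); the concrete model $\mathbf{V}^{{\cal M}_3}$ of Section~\ref{rem}, built over the complete saturated $C_{\omega,3}$-structure ${\cal M}_3$ and verifying the Leibniz law, furnishes an explicit paraconsistent witness.
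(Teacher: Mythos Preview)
Your proposal is correct and matches the paper's approach: the paper presents the Corollary with no separate proof, introducing it with the phrase ``Then, we have also proved,'' thereby treating it as an immediate consequence of the preceding Theorem together with the definitions of $C_{\omega,n}$- and $C_{\omega,\infty}$-structures and of the theories $ZFC_{\omega,n}$, $ZFC_{\omega,\infty}$. Your write-up is more explicit than the paper's one-line deferral, but the underlying reduction is identical.
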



On the other hand, we have that for our paraconsistent set theories the axiom of scheme {\bf Comprehension} is not valid in our models. To prove this, it is enough to see that  $||\exists x \forall y(y\in x)||=0$, this formula is an instance of {\bf Comprehension}, see  \cite[Theorem 6.3]{BT}. 



\end{document}